\newcommand\R{{\mathbb R}}
\newcommand\T{{\mathbb T}}
\newcommand\Sp{{\mathbb S}}
\newcommand\Comp{{\mathbb C}}
\def\AA{{\mathcal A}}
\def\BB{{\mathcal B}}
\def\CC{{\mathcal C}}
\def\EE{{\mathcal E}}
\def\LL{{\mathcal L}}
\def\NN{{\mathcal N}}
\def\OO{{\mathcal O}}
\def\PP{{\mathcal P}}
\def\RR{{\mathcal R}}
\def\SS{{\mathcal S}}
\def\UU{{\mathcal U}}
\def\VV{{\mathcal V}}
\def\VV{{\mathcal V}}
\def\BBB{{\mathscr B}}
\def\CCC{{\mathscr C}}
\def\Rho{\textnormal{P}}
\def\eps{{\varepsilon}}
\def\vph{{\varphi}}
\def\spdis{\Sigma_{\textnormal{d}}}
\newcommand{\ah}{\mathbf{a}}
\newcommand{\beh}{\mathbf{b}}
\newtheorem{theo}{Theorem}
\newtheorem{prop}{Proposition}
\newtheorem{lem}[prop]{Lemma}
\newtheorem{rem}[prop]{Remark}
\newtheorem{rems}[prop]{Remarks}
\newtheorem{notation}[prop]{Notations}
\newcommand{\step}[2]{\medskip\noindent\textit{Step #1: #2.}}
\newcommand{\nul}{\textnormal{N}}
\newcommand{\dom}{\textnormal{D}}
\newcommand{\ran}{\textnormal{R}}
\newcommand{\Span}{\textnormal{Span}}
\newcommand{\real}{\Re \textnormal{e} \,}
\newcommand{\ima}{\Im \textnormal{m} \,}
\newcommand{\Id}{\textnormal{Id} \,}
\newcommand{\lin}{\LL}
\newcommand{\spc}{\EE}
\newcommand{\reg}{\AA}
\newcommand{\dis}{\BB}
\newcommand{\ling}{L}
\newcommand{\spcg}{E}
\newcommand{\linp}{\mathbf{L}}
\newcommand{\spcp}{\mathbf{E}(k)}
\title[Spectral study of the $L^2$ linearized Botlzmann operator]{Spectral study of the linearized Boltzmann operator in $L^2$ spaces with polynomial and Gaussian weights.}
\begin{document}
	
	\author{{\sc Pierre Gervais}}
	
	\date\today
	
	\maketitle
	
	\begin{abstract}
		The aim of this paper is to extend to the spaces $L^2\left(\R^d, (1+|v|)^{2k} dv\right)$ the spectral study \cite{EP} led in $L^2(\R^d, \exp(|v|^2/2) dv)$ on the space inhomogeneous linearized Boltzmann operator for hard spheres.
		More precisely, we look at the Fourier transform in the space variable of the inhomogeneous operator and consider the dual Fourier variable as a fixed parameter. We then perform a precise study of this operator for small frequencies (by seeing it as a perturbation of the homogeneous one) and also for large frequencies from spectral and semigroup point of views.
		Our approach is based on perturbation theory for linear operators as well as enlargement arguments from \cite{GMM}.
	\end{abstract}
	
	\vspace{1cm}
	
	\tableofcontents
	
	
	\section{Introduction} 
	\setcounter{equation}{0}
	\setcounter{prop}{0}
	
	
	\subsection{The model} \label{subsec:model}
	
	Consider a rarefied gas whose average number of particles located at position $x \in \Omega$, traveling at velocity $v \in \R^d$ at time $t \geq 0$ is given by~$F(t, x, v)$, where $\Omega=\T^d$ or $\R^d$, and $d \geq 2$. Assume furthermore that the particles are uncorrelated, and that they undergo hard sphere collisions where the energy and momentum are conserved. Finally, we assume these binary collisions are the only interactions between particles. Under these conditions, this density satisfies the \textit{Boltzmann equation}
	\begin{equation}
	\label{eqn:boltz}
	\tag{B}
	\partial_t F + v \cdot \nabla_x F = Q(F,F),
	\end{equation}
	which is a transport equation whose source term takes into account the binary collisions between the particles. The operator $Q$ is called the \textit{Boltzmann operator} or \textit{collision operator} and is an integral bilinear operator defined as
	\begin{equation*}
	Q(F,G)(v) := \int_{\R^d_{v_*}} \int_{\Sp^{d-1}_\sigma} |v-v_*|(F' G'_* - 
	F G_*) dv_* d\sigma
	\end{equation*}
	where we used the standard notations
	\begin{itemize}
		\item $v$ and $v_*$ for the velocities of two particles after the collision,
		\item $v'$ and $v_*'$ for their velocities before the collision, given by
		$$v' = \frac{v + v_*}{2} + \frac{|v-v_*|}{2} \sigma, ~ v_*' = \frac{v + v_*}{2} - \frac{|v-v_*|}{2} \sigma,$$
		\item $F':=F(v')$, $G_*':=G(v_*')$ and $G_*:=G(v_*)$.
	\end{itemize}
	
	\subsubsection{Equilibria}
	
	The (global) Maxwellian distributions, which write
	\begin{equation*}
	M_{\rho, \theta, u}(v) := \frac{\rho}{(2 \pi \theta)^{d/2}} \exp \left(-\frac{|v - u|^2}{2 \theta}\right)
	\end{equation*}
	for some $\rho, \theta > 0$ and $u \in \R^d$ can be shown to be equilibria of \eqref{eqn:boltz}. We will denote in this paper the normal centered distribution ($\rho=1, u=0, \theta=1$) by $M$.
	
	\subsubsection{Hydrodynamic limits}
	
	By choosing a system of reference values for length, time and velocity (see for example \cite{G}), we get a dimensionless version of the equation:
	\begin{equation*}
	\eps \partial_t F^\eps+v \cdot \nabla_x F^\eps = \frac{1}{\eps} Q(F^\eps, F^\eps),
	\end{equation*}
	where $\eps$ is the Knudsen number and corresponds to the mean free path, that is to say the average distance traveled by a particle between two collisions. Performing the linearization $F^\eps =: M + \eps f^\eps$, the equation rewrites in terms of $f^\eps$ as
	\begin{equation}
	\label{eqn:boltzmann_scaled_lin}
	\eps \partial_t f^\eps + v \cdot \nabla_x f^\eps = \lin f^\eps + \frac{1}{\eps}Q(f^\eps,f^\eps),
	\end{equation}
	where $\lin h := Q(M, h) + Q(h, M)$. Letting $\eps$ go to zero, we expect to get the dynamics of a fluid as the amount of collisions will then go to infinity. This issue of unifying the mesoscopic and macroscopic points of view goes back to Hilbert, and several formal methods have been suggested by Hilbert \cite{Hilbert}, Chapman, Enskog and Grad \cite{Grad}. These were made rigorous by C. Bardos, F. Golse and D. Levermore in \cite{BGL} by proving that if~$f^\eps$ and its moments converge in some weak sense as $\eps$ goes to zero, then the limiting moments satisfy the Navier-Stokes-Fourier system.
	In their paper \cite{BU}, C. Bardos and S. Ukai showed that these convergence assumptions hold for any initial datum $f_{\text{in}}$ small enough in some norm by rewriting the previous equation as an integral one:
	\begin{equation*}
	f^\eps(t) = S^\eps\left(\eps^{-2} t\right) f_{\text{in}} + \frac{1}{\eps} \int_0^t S^\eps\left(\eps^{-2} s\right) Q(f^\eps(t-s), f^\eps(t-s)) ds,
	\end{equation*}
	where $S^\eps$ is the semigroup generated by $\lin-\eps v \cdot \nabla_x$. Its dynamics as $\eps$ goes to zero has been described by the spectral study \cite{EP} of the inhomogeneous linearized Boltzmann operator in Fourier space, i.e. $\lin-i \eps v \cdot \xi$, where $\xi$ is the dual variable of~$x$. These results were used by I. Gallagher and I. Tristani in \cite{IGT} to prove a partial converse result: for any solution to the Navier-Stokes-Fourier system defined on a time interval~$[0, T]$, the solution $f^\eps$ to \eqref{eqn:boltzmann_scaled_lin} exists for $\eps$ small enough (depending on the Navier-Stokes-Fourier solution), is defined on $[0, T]$, and converges to some limit~$f^0$, whose moments are the aforementioned solution of the Navier-Stokes-Fourier system.
	
	\subsection{Statement of the main results}
	
	\label{scn:goals}
	
	Let us define some notations used in the statement of Theorem \ref{thm:intro_spectral} and \ref{thm:intro_splitting_semigroup}. We denote~$L^2(m)$ the $L^2$ Hilbert space associated with the measure $m^2(v)dv$. $\BBB(X, Y)$ is the space of bounded linear operators from a Banach space $X$ to another one $Y$. For a linear operator $\Lambda$, we denote by~$\Sigma(\Lambda)$ its spectrum. If it generates a strongly continuous semigroup, we denote it by~$S_\Lambda$, and~$\Pi_{\Lambda, \lambda}$ is the spectral projector associated with an eigenvalue $\lambda \in \spdis(\Lambda)$, where~$\spdis(\Lambda)$ is the discrete spectrum of~$\Lambda$. Finally, we write $\Delta_a := \{\real z > a\}$.
	
	We will show that, similarly to the results in \cite{EP}, when considered as a closed operator in $L^2\left(M^{-1/2}\right)$ or $L^2\left( \langle v \rangle^k \right)$, the semigroup generated by $\lin - i v \cdot \xi$ has exponential decay in time (for large frequencies~$\xi$), and splits (for small frequencies~$\xi$) into a first part corresponding to its rightmost eigenvalues, and a remainder that decays exponentially in time. We also gather enough information on the eigenvalues, spectral projectors and remainder so that we expect that \cite{BU} or \cite{IGT} may be adapted to~$L^2\left(\langle v \rangle^k\right)$. This analysis is postponed to a future work. Let us present the main theorems of this paper.
	
	\newcommand{\thpart}[2]{
		\bigskip
		\noindent{\textnormal{\textbf{(#1) - #2}}}
	}
	
	\begin{theo}
		\label{thm:intro_spectral}
		There exists $k_* > 2$ such that for any fixed $k > k_*$, denoting the spaces~$\spcg := L^2\left(M^{-1/2}\right)$, $\spcp := L^2 \left(\langle v \rangle^k\right)$, the operator $\lin_\xi := \lin - iv \cdot \xi$ is closed in both spaces $\spcp$ and $\spcg$ for any $\xi \in \R^d$. Furthermore, the following holds:
		
		\thpart{1}{Spectral gaps and expansion of the eigenvalues.}There exist $\ah,  \beh, r_0 > 0$ such that, in both spaces,
		\begin{gather}
		\label{eqn:spectral_gap_small_freq}
		\Sigma(\lin_\xi) \cap \Delta_{-\ah} = \{ \lambda_{-1}(|\xi|), \dots, \lambda_2(|\xi|) \} \subset \spdis(\lin_\xi), ~ |\xi| \leq r_0, \\
		\label{eqn:spectral_gap_large_freq}
		\Sigma(\lin_\xi) \cap \Delta_{-\beh} = \emptyset, ~ |\xi| \geq r_0,
		\end{gather}
		where the eigenvalues $\lambda_j$ have the expansion around $|\xi| = 0$ in $\Comp$
		\begin{equation}
		\label{eqn:eigenvalue_expansion}
		\lambda_j(|\xi|) = \lambda_j^{(1)} |\xi| + \lambda_j^{(2)} |\xi|^2 + o(|\xi|^2),
		\end{equation}
		with $\lambda_{\pm 1}^{(1)} = \pm i \sqrt{1+2/d}$, $\lambda_j^{(1)} =  0$ for $j=0, 2$, and $\lambda_j^{(2)} < 0$ for $j=-1,\dots,2$.
		
		\thpart{2}{Spectral decomposition and expansion of the projectors.} There exist projectors $\PP_j(\xi) \in \BBB(\spcp, \spcg)$ for any~$0 < |\xi| \leq r_0$ and $j=-1,\dots, 2$, that expand in~$\BBB(\spcp, \spcg)$ as
		\begin{equation}
		\label{eqn:projector_expansion}
		\PP_j(\xi) = \PP_j^{(0)}\left(\widetilde{\xi}\right) + |\xi| \PP_j^{(1)}\left(\widetilde{\xi}\right) + \PP_j^{(2)}(\xi), ~ \widetilde{\xi} := \frac{\xi}{|\xi|},
		\end{equation}
		where $\left\|\PP_j^{(\ell)}\right\|_{\BBB(\spcp, \spcg)}$ is uniformly bounded in $|\xi|$, and $\left\|\PP_j^{(2)}(\xi)\right\|_{\BBB(\spcp, \spcg)} =~o(|\xi|)$.
		
		\medskip
		\noindent
		For $j=0, \pm 1$, $\PP_j^{(0)}\left(\widetilde{\xi}\right)$ is a projection onto $\Comp e_j^{(0)} \left(\widetilde{\xi}\right)$, with
		\begin{gather}
		\label{eqn:e_0_0}
		e_0^{(0)} \left(\widetilde{\xi}\right) = \left(1 - \frac{1}{2} \left(|v|^2-d\right) \right)M,\\
		\label{eqn:e_pm1_0}
		e_{\pm 1}^{(0)} \left(\widetilde{\xi}\right) = \left(1 \pm \widetilde{\xi} \cdot v + \frac{1}{d} \left(|v|^2-d\right) \right) M,
		\end{gather}
		and $\PP_2^{(0)}\left(\widetilde{\xi}\right)$ is a projection on $\left\{ c \cdot v M ~  | ~ c \cdot \widetilde{\xi} = 0 \right\}$, which is spanned by
		\begin{gather}
		e_{2,\ell}^{(0)}\left(\widetilde{\xi}\right) = C_\ell\left(\widetilde{\xi}\right) \cdot vM, ~ \ell=1,\dots,d-1,
		\end{gather}
		where $\left(\widetilde{\xi}, C_1\left(\widetilde{\xi}\right), \dots, C_{d-1}\left(\widetilde{\xi}\right)\right)$ can be assumed to be any fixed orthonormal basis of~$\R^d$. Furthermore, they satisfy
		\begin{gather}
		\label{eqn:lin_xi_p_j_xi}
		\lin_\xi \PP_j(\xi) = \lambda_j(|\xi|) \PP_j(\xi), \\
		\label{eqn:proj_orthogonal}
		\PP_j(\xi) \PP_\ell(\xi) = 0, ~ j \neq \ell,\\
		\label{eqn:sum_proj_0_pi_0}
		\sum_{j = -1}^2 \PP_j^{(0)}\left(\widetilde{\xi}\right) = \Pi_{\lin, 0}, ~ \widetilde{\xi} \in \Sp^{d-1}.
		\end{gather}
		
		\thpart{3}{Expression of the projectors.}
		For any $0 < |\xi| \leq r_0$, $j=-1,\dots,1$ and~$\ell = 1, \dots, d-1$, there exist functions $e_j(\xi), e_{2, \ell}(\xi) \in \spcg$ and $f_j(\xi), f_{2, \ell}(\xi) \in \spcp$ such that the projectors write
		\begin{gather}
		\label{eqn:expression_p_j}
		\PP_j(\xi) g = \langle g, f_{j}(\xi) \rangle_{\spcp} \, e_j(\xi), ~ j=0, \pm 1, \\
		\label{eqn:expression_p_2}
		\PP_2(\xi) g = \sum_{\ell=1}^{d-1}\left\langle g, f_{2, \ell}(\xi) \right\rangle_{\spcp} \, e_{2, \ell}(\xi), \\
		\label{eqn:biorthogonal}
		\left\langle e_\alpha(\xi), f_\beta(\xi)\right\rangle_{\spcp} = \delta_{\alpha, \beta},
		\end{gather}
		where $\alpha$ and $\beta$ are any indices among $-1, 0, 1, (2, 1), \dots, (2, d-1)$, and they have the following expansions:
		\begin{gather}
		\label{eqn:expansion_e_j}
		e_\alpha(\xi) = e_\alpha^{(0)}\left(\widetilde{\xi}\right) + |\xi| e_\alpha^{(1)}\left(\widetilde{\xi}\right) + e^{(2)}_\alpha(\xi),\\
		\label{eqn:expansion_f_j}
		f_\alpha(\xi) = f_\alpha^{(0)}\left(\widetilde{\xi}\right) + |\xi| f_\alpha^{(1)}\left(\widetilde{\xi}\right) + f_\alpha^{(2)}(\xi),
		\end{gather}
		where $\alpha$ is any index among $-1, 0, 1, (2, 1), \dots, (2, d-1)$, $\left\|e_\alpha^{(\ell)}\right\|_{\spcg}$, $\left\|f_\alpha^{(\ell)}\right\|_{\spcp}$ are uniformly bounded in $|\xi| \in (0, r_0]$, $\left\|e_\alpha^{(2)}(\xi)\right\|_\spcg = o(|\xi|)$ and $\left\|f_\alpha^{(2)}(\xi)\right\|_{\spcp} =~o(|\xi|)$.
	\end{theo}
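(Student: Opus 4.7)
The plan is to combine the spectral results obtained in the Gaussian space $\spcg$ in \cite{EP} with the enlargement method of \cite{GMM} to transfer them to the polynomial space $\spcp$, and then to deploy Kato's analytic perturbation theory around $|\xi|=0$ to extract the expansions. Closedness of $\lin_\xi$ in both spaces is standard, because $v\cdot\xi$ is a multiplication operator with natural domain $\{f : \langle v\rangle f \in \spcg\}$ (resp.\ $\spcp$) and $\lin$ is relatively bounded with respect to it via the hard-sphere collision frequency $\nu(v)\sim\langle v\rangle$. The backbone of everything that follows will be a GMM-type splitting
\[
\lin_\xi = \AA + \BB_\xi, \qquad \BB_\xi := \BB - i v\cdot \xi,
\]
where $\AA$ is a bounded truncated version of the gain part of $\lin$ mapping $\spcp$ continuously into $\spcg$, while $\BB_\xi$ is hypodissipative in $\spcp$ with spectral gap at least $-\beh$, uniformly in $\xi$; the threshold $k_*$ emerges as the value of $k$ required to secure this hypodissipativity. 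Iterating the resolvent identity $(\zeta - \lin_\xi)^{-1} = (\zeta - \BB_\xi)^{-1} + (\zeta - \BB_\xi)^{-1} \AA (\zeta - \lin_\xi)^{-1}$ then implies that $\Sigma(\lin_\xi) \cap \Delta_{-\beh}$ coincides in $\spcp$ and $\spcg$, proving the $\spcp$ parts of \eqref{eqn:spectral_gap_small_freq} and \eqref{eqn:spectral_gap_large_freq} from their $\spcg$ counterparts.

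For the small-frequency expansions, $\lin_\xi$ is viewed as an analytic perturbation of $\lin$ in the parameter $\xi$. The unperturbed operator has $0$ as an isolated semisimple eigenvalue of multiplicity $d+2$, with spectral projector $\Pi_{\lin, 0}$ onto the hydrodynamic modes. Kato's reduction procedure produces $d+2$ branches $\lambda_j(|\xi|)$ whose first-order coefficients are the eigenvalues of the reduced operator $\Pi_{\lin,0}(-iv\cdot\widetilde\xi)\Pi_{\lin,0}$ on $\ker \lin$: a direct matrix computation in the basis $\bigl(M,\, vM,\, (|v|^2 - d)M\bigr)$ gives the classical acoustic/incompressible/thermal decomposition, yielding \eqref{eqn:e_0_0}--\eqref{eqn:sum_proj_0_pi_0} together with the stated values of $\lambda_j^{(1)}$. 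The second-order coefficients $\lambda_j^{(2)}$ arise from the next Kato correction, expressible as a quadratic form involving the pseudoinverse of $\lin$ on $(\ker \lin)^\perp$, whose negativity is the standard Navier--Stokes--Fourier dissipation computation.

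The projectors are then obtained from Dunford's formula on small contours $\gamma_j$ enclosing $\lambda_j(|\xi|)$. Using the resolvent identity above together with the holomorphy of $(\zeta - \BB_\xi)^{-1}$ inside $\gamma_j$, we get
\[
\PP_j(\xi) = \frac{1}{2\pi i} \oint_{\gamma_j} (\zeta - \BB_\xi)^{-1} \AA (\zeta - \lin_\xi)^{-1}\, d\zeta,
\]
which manifestly belongs to $\BBB(\spcp, \spcg)$ since $\AA: \spcp \to \spcg$; the representations \eqref{eqn:expression_p_j}--\eqref{eqn:biorthogonal} then follow from the rank of the projectors together with the choice of biorthogonal system in the Kato construction. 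Expanding both resolvents in powers of $|\xi|$ around $\xi=0$ and sorting terms by homogeneity yields \eqref{eqn:projector_expansion}, \eqref{eqn:expansion_e_j}, \eqref{eqn:expansion_f_j}. The main obstacle I foresee is precisely controlling these expansions in the $\BBB(\spcp, \spcg)$ norm rather than within a single ambient space: this requires uniform-in-$\xi$ resolvent bounds for $\BB_\xi$ on the contour and careful bookkeeping of the powers of $|\xi|$ produced by each occurrence of $\AA$ in the iterated resolvent identity, which must ultimately be traded against dissipation in $\BB_\xi$ to close the remainder estimates.
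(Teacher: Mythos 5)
Your overall strategy matches the paper's backbone: the GMM splitting $\lin_\xi = \AA + \BB_\xi$ with $\AA:\spcp\to\spcg$ bounded and $\BB_\xi$ hypodissipative, the factorized resolvent identity to ``enlarge'' the spectral structure from $\spcg$ to $\spcp$, and Kato analytic perturbation around $\xi = 0$ to produce the branches $\lambda_j$. This is essentially how the paper establishes \eqref{eqn:spectral_gap_small_freq}--\eqref{eqn:spectral_gap_large_freq}, the values of $\lambda_j^{(1)}$, and the ranges of $\PP_j^{(0)}(\widetilde\xi)$.

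The gap is in how you propose to build and expand the \emph{individual} projectors $\PP_j(\xi)$. You write them as Dunford integrals over small contours $\gamma_j$ surrounding each $\lambda_j(|\xi|)$. This cannot be made uniform as $|\xi|\to 0$: all branches collapse to $0$, so the $\gamma_j$ must shrink with $|\xi|$ and the resolvent degenerates along the shrinking contour, which ruins the claimed $O(1)$ bounds on $\PP_j^{(\ell)}$ in $\BBB(\spcp,\spcg)$. Worse, the paper points out explicitly that one has \emph{no} a priori reason to believe $\lambda_0(|\xi|)$ and $\lambda_2(|\xi|)$ are distinct (both vanish to first order); if they coincide, no contour separates them, and your recipe delivers only $\PP_0 + \PP_2$, not each projector separately. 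The same degeneracy infects your statement that $\lambda_j^{(2)}$ is ``the next Kato correction'': at first order the reduced matrix $-i\,\Pi_{\lin,0}\,v\cdot\widetilde\xi\,\Pi_{\lin,0}$ still has a $d$-fold zero eigenvalue, so the naive second-order quadratic form does not single out a branch without further reduction.

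The paper avoids both problems by never separating the eigenvalues with contours. It uses one fixed contour $\Gamma$ around the entire cluster to get the \emph{total} projector $\PP(\xi)$, then Kato's similarity transform $\UU(\xi)$ to pull $\ran\PP(\xi)$ onto the fixed space $\NN = \ker\lin$, and studies the rescaled finite-dimensional operator $\widetilde\lin(\xi) = |\xi|^{-1}\UU(\xi)^{-1}\lin_\xi\UU(\xi)|_\NN$. The rotation covariance $O\lin_\xi = \lin_{O\xi}O$ then forces (Lemma \ref{prop:shape_auxiliary}) an exact block-diagonal structure $\mathrm{diag}\bigl(\widetilde\lambda_2(r)\Id_{d-1},\, A(r)\bigr)$ in the basis $\{\varphi_2,\dots,\varphi_d,\varphi_0,\varphi_1,\varphi_{d+1}\}$. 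This defines $\PP_2(\xi)$ algebraically via the block, regardless of whether $\widetilde\lambda_2(r)$ accidentally equals an eigenvalue of $A(r)$; and $A(0)$ has three simple distinct eigenvalues $0,\pm i\sqrt{1+2/d}$, so nondegenerate finite-dimensional perturbation theory applies to $A(r)$. You would need to add this symmetry/block-reduction step and the $1/|\xi|$ rescaling: without them, points (2) and (3) of the theorem are not closed.
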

	
	\begin{rems}
		A few precisions are to be made on these results.
		\begin{itemize}
			\item In this theorem, $e_\alpha : \{0 < |\xi| \leq r_0\} \rightarrow \spcg$, $f_\alpha : \{0 < |\xi| \leq r_0\} \rightarrow \spcp$ and~$\PP_j : \{0 < |\xi| \leq r_0\} \rightarrow \BBB(\spcp, \spcg)$ are measurable.
			\item Using the fact that in the Hilbert space $\spcg$, $\left(\lin_\xi\right)^* = \lin_{-\xi}$, and the relation~$O \lin_\xi O^{-1} = \lin_{O^{-1} \xi}$, where $O$ is any real $d \times d$ orthogonal matrix, one can show that $\lambda_1$ and $\lambda_{- 1}$ are conjugate to one another, and $\lambda_j$ is real for~$j=0,2$.
			\item Furthermore, one can deduce an expression of $f_j(\xi)$ in terms of $e_j(\xi)$ using the fact that $\PP_2(\xi)^*=\PP_2(-\xi)$, $\PP_j(\xi)^*=\PP_{-j}(-\xi)$ for~$j=0, \pm 1$, and using the relation
			$$\langle f, g \rangle_\spcg = \langle f, g \langle v \rangle^{-2k}M^{-1} \rangle_{\spcp}.$$
		\end{itemize}
		
	\end{rems}
	
	%
	
	\begin{theo}
		\label{thm:intro_splitting_semigroup}
		Under the same assumptions, denoting $\spc=\spcg$ or $\spcp$, there exists constants $C > 0$ and $\gamma \in (0, \ah)$ such that for any $\xi \in \R^d$, $\lin_\xi$ generates on $\spc$ a~$\CC^0$-semigroup that splits as
		\begin{align}
		\label{eqn:splitting_semigroup}
		S_{\lin_{\xi} }(t) = \chi(\xi) \sum_{j=-1}^2 e^{t \lambda_j(\xi) } \PP_j\left(\xi\right) + \VV(t, \xi), ~ \xi \neq 0, \\
		S_\lin(t) = \sum_{j=-1}^2 \PP_j^{(0)}\left(\widetilde{\xi}\right) + \VV(t, 0) = \Pi_{\lin, 0} + \VV(t, 0), ~ \widetilde{\xi} \in \Sp^{d-1},
		\end{align}
		where we have denoted $\chi$ the characteristic function of $\{|\xi| \leq r_0\}$, and the remainder~$\VV$ satisfies
		\begin{gather}
		\label{eqn:orthogonal_proj_remainder}
		\PP_j(\xi) \VV(t, \xi) = \VV(t, \xi) \PP_j(\xi) = 0,\\
		\label{eqn:exponential_decay}
		\|\VV(t, \xi)\|_{\BBB(\spc)} \leq C e^{-\gamma t}.
		\end{gather}
	\end{theo}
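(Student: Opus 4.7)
Theorem~\ref{thm:intro_spectral} already furnishes every piece of spectral information one needs; the task is to promote it to a semigroup decay statement in both $\spcg$ and the polynomially weighted space $\spcp$. I would proceed in three stages: generation, construction of the splitting, and exponential decay of the remainder.

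\textbf{Semigroup generation and abstract splitting.} On $\spcg$ the operator $\lin_\xi = \lin - iv\cdot\xi$ is maximal dissipative (Lumer--Phillips: $\lin\le 0$ is self-adjoint and the multiplier $-iv\cdot\xi$ is anti-self-adjoint), hence generates a contraction $\CC^0$-semigroup. On $\spcp$ I would invoke the enlargement scheme of \cite{GMM}: pick a splitting $\lin_\xi = \AA + \BB_\xi$ with $\AA$ a $\xi$-independent bounded regularizing operator $\spcp\to\spcg$ and $\BB_\xi := \BB - iv\cdot\xi$ hypodissipative on $\spcp$ with rate uniform in $\xi$, and then define $S_{\lin_\xi}$ on $\spcp$ via
\begin{equation*}
S_{\lin_\xi}(t) = S_{\BB_\xi}(t) + \bigl(S_{\lin_\xi}\, \AA \star S_{\BB_\xi}\bigr)(t),
\end{equation*}
where $\star$ denotes time convolution. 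Then, for $|\xi|\le r_0$, I set $\Pi(\xi) := \sum_{j=-1}^{2}\PP_j(\xi)$ and $\VV(t,\xi) := S_{\lin_\xi}(t)(\Id - \Pi(\xi))$. Since the $\PP_j(\xi)$ are spectral projectors of $\lin_\xi$, \eqref{eqn:lin_xi_p_j_xi}--\eqref{eqn:proj_orthogonal} force $S_{\lin_\xi}(t)\PP_j(\xi) = e^{t\lambda_j(|\xi|)}\PP_j(\xi)$, which yields the decomposition \eqref{eqn:splitting_semigroup} and the orthogonality \eqref{eqn:orthogonal_proj_remainder}. For $|\xi|\ge r_0$ I simply set $\VV(t,\xi) := S_{\lin_\xi}(t)$; the case $\xi=0$ is \eqref{eqn:sum_proj_0_pi_0}.

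\textbf{Decay of the remainder.} This is the analytic core. On the Hilbert space $\spcg$, Gearhart--Pr\"uss reduces \eqref{eqn:exponential_decay} to a uniform resolvent bound
\begin{equation*}
\sup_{\real z\ge -\gamma}\, \bigl\|(\lin_\xi - z)^{-1}(\Id - \Pi(\xi))\bigr\|_{\BBB(\spcg)} < \infty,
\end{equation*}
for some $\gamma\in(0,\min(\ah,\beh))$. Such a bound would follow from the classical hypocoercivity estimates for $\lin_\xi$ on $\spcg$, combining coercivity of $-\lin$ on $(\ker\lin)^\perp$ with skew-symmetric compensations from $-iv\cdot\xi$; the projection $\Id - \Pi(\xi)$ precisely removes the soft modes responsible for the spectrum approaching the imaginary axis as $|\xi|\to 0$, and is irrelevant in the regime $|\xi|\ge r_0$ where \eqref{eqn:spectral_gap_large_freq} already gives a full gap. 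To transfer the decay from $\spcg$ to $\spcp$, I would iterate the factorization formula above $N$ times so that the $N$-fold kernel $(\AA\, S_{\BB_\xi})^{\star N}$ maps $\spcp\to\spcg$ boundedly; this expresses $\VV$ on $\spcp$ in terms of $\VV$ on $\spcg$ plus convolutions of $S_{\BB_\xi}$ that already decay faster than $e^{-\gamma t}$.

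\textbf{Main obstacle.} The delicate point is uniformity of $C$ and $\gamma$ in $\xi\in\R^d$. Spectral information alone does not imply semigroup decay outside normal settings, so one genuinely needs the resolvent and hypocoercivity analysis sketched above. The structural fact that makes uniformity possible is that the transport $-iv\cdot\xi$ is skew-symmetric in both $\spcg$ and $\spcp$, so it contributes zero real part to the dissipation rate of $\BB_\xi$, and no $\xi$-dependent loss appears in the enlargement iteration. Stitching the two regimes $|\xi|\lessgtr r_0$ together while preserving the commutation \eqref{eqn:orthogonal_proj_remainder} through the enlargement---in particular ensuring that the curves $\lambda_j(|\xi|)$ do not degrade the bounds as $|\xi|\to 0$---is the technical heart of the argument.
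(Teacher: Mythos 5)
Your abstract splitting is exactly the paper's: set $\Pi(\xi)=\sum_j\PP_j(\xi)$ for $|\xi|\le r_0$, $\VV(t,\xi):=S_{\lin_\xi}(t)(\Id-\Pi(\xi))$, and $\VV(t,\xi):=S_{\lin_\xi}(t)$ for $|\xi|\ge r_0$; the identities \eqref{eqn:splitting_semigroup}--\eqref{eqn:orthogonal_proj_remainder} then follow formally from \eqref{eqn:lin_xi_p_j_xi}--\eqref{eqn:sum_proj_0_pi_0} as you say. Where you diverge is the analytic core. The paper never invokes hypocoercivity and never iterates the Duhamel formula to pass from $\spcg$ to $\spcp$. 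Instead it works at the resolvent level \emph{directly in each space} $\spc\in\{\spcg,\spcp\}$: the interspace factorization \eqref{eqn:fact_interspace} is combined with the Gaussian factorization \eqref{eqn:fact_ling_xi_lambda} to produce the formula \eqref{eqn:fact_general} for $\RR_{\lin_\xi}(z)$ in $\BBB(\spc)$, valid as soon as $\|K\RR_{-(\nu+iv\cdot\xi)}(z)\|_{\BBB(\spcg)}<1$. The Ukai--Yang estimate \eqref{eqn:est_k_nu} then furnishes this smallness uniformly for $|\ima z|\ge T$ or $|\xi|\ge R$, the m-dissipativity of $\BB_\xi+\ah_1$ (Lemma \ref{prop:def_lin_xi}) bounds $\RR_{\BB_\xi}$ and $\RR_{-(\nu+iv\cdot\xi)}$, and the remaining compact region of $(\lambda,\xi)$ is handled by continuity of the resolvent (Proposition \ref{prop:analicity_resolvent}) together with the maximum modulus principle. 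With the uniform resolvent bound in hand, Theorem \ref{thm:gpg} (Gearhart--Pr\"uss--Greiner) is applied \emph{separately in $\spcg$ and in $\spcp$}, either on $\NN(\xi)^\bot$ (small $\xi$) or on the whole space (large $\xi$). This sidesteps the $N$-fold convolution enlargement entirely.

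Two points in your sketch would need genuine work before they close. First, the ``classical hypocoercivity estimates'' you invoke to obtain $\sup_{\real z\ge-\gamma}\|\RR_{\lin_\xi}(z)(\Id-\Pi(\xi))\|<\infty$ uniformly in $\xi$ are not actually cited or established; and there is a mild circularity in the plan, since hypocoercivity typically produces the semigroup decay directly (via a modified norm) rather than the resolvent bound that Gearhart--Pr\"uss requires as input. The paper's substitute is \eqref{eqn:est_k_nu} plus a non-constructive continuity/compactness argument for the bounded region $r_*\le|\xi|\le R_*$, and this non-constructiveness is explicitly acknowledged in the introduction. Second, transferring the decay of $\VV(t,\xi)$ from $\spcg$ to $\spcp$ via the $N$-fold iterated kernel $(\AA\,S_{\BB_\xi})^{\star N}$ requires care with the projection: $\AA\,S_{\BB_\xi}(s)(\Id-\Pi(\xi))$ does not land in $\ran(\Id-\Pi(\xi))$, so one cannot simply replace $S_{\lin_\xi}$ by its restriction in the convolution; one must thread the projectors through as in the full semigroup-enlargement theorem of \cite{GMM}, and verify that the constants stay uniform in $\xi$. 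Your route is viable in principle and is closer to the original GMM semigroup-level enlargement, but the paper's resolvent-level version is shorter precisely because it never has to commute the projection through a convolution.
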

	
	\subsection{Method of proof and state of the art}
	
	Theorem \ref{thm:intro_spectral} was initially proved in \cite{EP} in the space $L^2\left(M^{-1/2}\right)$. The authors first proved that for some $\delta > 0$, the following equations are equivalent for $|\xi|, |\lambda| \leq \delta$:
	\begin{gather*}
	\left(\lin - i v \cdot \xi - \lambda\right)f = 0,\\
	F(\lambda, \xi) f_0 = 0,
	\end{gather*}
	where $f_0$ is the projection of $f$ on $\NN$, $f_1 := f- f_0$ is related to $f_0$ by $f_1 = G(\lambda, \xi)f_0$, and $F(\lambda, \xi) \in \BBB(\NN)$, $G(\lambda, \xi) \in \BBB(\NN, \NN^\bot)$ are smooth in $\xi$ and $\lambda$. They then proceed to solve $\det F(\lambda, \xi) = 0$ for $\lambda=\lambda(\xi)$ using the implicit function theorem, exhibit corresponding $f_0(\xi)$, construct the eigenfunctions and then the spectral projectors.
	
	In their proof, the threshold $\delta$ was not found using constructive estimates, nor do they prove the existence of a spectral gap for $|\xi|$ bounded away from zero. However, their results hold for a general class of potentials, including hard and Maxwellian potentials with cut-off.
	
	\smallskip
	
	T. Yang and H. Yu \cite{YY} have a similar approach and still prove their results in~$L^2\left(M^{-1/2}\right)$, but they cover a broader class of kinetic equations. Furthermore, they prove the existence of a spectral gap for large $\xi$ and encounter the same difficulties as in this paper: they are able to provide constructive estimates for small and large frequencies, but need a non-constructive argument to deal with intermediate ones.
	
	We also mention \cite{CIP} and \cite{UY} who prove most of these results with similar approaches.
	
	\smallskip
	
	In this paper, we generalize the results from \cite{EP} in spaces of the form $L^2\left(\langle v \rangle^k\right)$ using a new splitting of the homogeneous operator as well as an ``enlargement theorem'', both from \cite{GMM}. This splitting has the same properties in both Gaussian and polynomial spaces (dissipativity and relative boundedness, regularizing effect, see Lemma \ref{prop:def_lin_xi}) which allows to treat both cases in a unified framework, and the aforementioned ``enlargement theorem'' guaranties that the spectral properties (structure of the spectrum and eigenspaces) do not depend on the specific choice of space, be it Gaussian or polynomial. We can therefore rely on previous studies of the Gaussian case when convenient.
	
	As we deal with hard sphere case, the inhomogeneous operator in Fourier space can be seen as a relatively bounded perturbation of the homogeneous operator and thus be studied through classical (analytic) perturbation theory. In particular, all estimates are constructive, except for the exponential decay estimates for large frequencies.
	
	Unlike \cite{EP} and \cite{YY} who compute the roots of the dispersion relations associated with the linear inhomogeneous Boltzmann equation, we prove that for small $\xi$, the zero eigenvalue (resp. the null space $\NN$) of the homogeneous operator ``splits'' into several eigenvalues (resp. an invariant space $\NN(\xi)$ isomorphic to $\NN$). We then consider $\left(\lin_\xi\right)_{|\NN(\xi)} \in \BBB(\NN(\xi))$ and straighten $\NN(\xi)$ into $\NN$ to get a new operator~$\widetilde{\lin}(\xi) \in \BBB(\NN)$ conjugated to $\left(\lin_\xi\right)_{|\NN(\xi)}$ which we study using finite dimensional perturbation theory.

	\subsection{Outline of the paper}
	
	In Section 2, we show using results from \cite{GMM} that there exist some threshold~$k_* > 0$ such that in both spaces $L^2\left(M^{-1/2}\right)$ and $L^2\left( \langle v \rangle^k \right)$ with~$k > k_*$, $\lin_\xi$ generates a strongly continuous semigroup, satisfies some rotation invariance property and the multiplication operator by $v$ is $\lin_\xi$-bounded. Then, combining results from \cite{GMM} and \cite{UY}, we show the existence in both spaces of spectral gaps for small and large $\xi$: there exists~$\ah, \beh > 0$ such that for large $\xi$, the spectrum does not meet $\Delta_{-\beh}$, and for small~$\xi$, the part $\Sigma(\lin_\xi) \cap \Delta_{-\ah}$ contains a finite amount of discrete eigenvalues enclosed by some fixed path $\Gamma$.
	
	In Section 3, this path allows to transform the eigenvalue problem into an equivalent one on the finite dimensional null-space of $\lin$ and in turn derive expansions for the eigenvalues and associated spectral projectors, thus proving Theorem \ref{thm:intro_spectral}.
	
	In Section 4, we prove Theorem \ref{thm:intro_splitting_semigroup}. The splitting comes from Theorem \ref{thm:intro_spectral}, and the decay estimate from Theorem \ref{thm:gpg} whose assumptions are obtained using estimates from \cite{UY} combined with \cite{GMM}, and the continuity of the resolvent.
	
	We recall in the appendix some results from spectral theory and semigroup theory.
	
	\subsection{Notations and definitions}
	
	\subsubsection{Function spaces}
	For any Borel function $m > 0$ and $p \in [1, \infty]$, we define the space $L^p(m)$ as the set of measurable functions $f : \R^d \to \Comp$ such that
	$$\|f\|_{L^p(m)} := \|f m\|_{L^p} < \infty.$$
	
	\subsubsection{Operator theory}
	\label{scn:operator_theory}
	For some given Banach spaces $X$ and $Y$, we will denote the space of closed linear operators $\Lambda$ from their domain $\dom(\Lambda)$ to $Y$ by $\CCC(X,Y)$. The space of bounded linear operators will be denoted $\BBB(X,Y)$. For any linear operator~$\Lambda$, we denote its null space by $\nul (\Lambda)$ and its range by $\ran (\Lambda)$.
	
	\smallskip
	\noindent
	In particular, we write $\CCC(X) = \CCC(X , X)$ and $\BBB(X) = \BBB(X,X)$. We will also consider the \textit{resolvent set} $\Rho(\Lambda)$ of $\Lambda$ which is defined to be the open set of all $z \in \Comp$ such that $\Lambda-z$ is bijective from $\dom (\Lambda)$ onto $X$, and whose inverse is a bounded operator of $X$. The \textit{resolvent operator} is an analytic function defined by
	\begin{equation*}
	\begin{array}{lccl}
	\RR_\Lambda : &\Rho(\Lambda)& \rightarrow &\BBB(X) \\
	&z &\mapsto &(z - \Lambda)^{-1},
	\end{array}
	\end{equation*}
	and cannot be continued analytically beyond this set.
	The complement of $\Rho(\Lambda)$ is called the \textit{spectrum} of $\Lambda$ and is denoted $\Sigma(\Lambda) = \Comp - \Rho(\Lambda)$, which is therefore the set of all values $\lambda$ such that $\Lambda-\lambda$ is not boundedly invertible.
	\medskip\\
	When a spectral value $\lambda$ is isolated in the spectrum, or in other words when for some~$\eps > 0$ small enough
	$$\Sigma(\Lambda) \cap \{z \in \Comp ~| ~ |z-\lambda| < \eps\} = \{\lambda\},$$
	we may define the associated \textit{spectral projector}
	\begin{equation*}
	\Pi_{\Lambda, \lambda} := \frac{1}{2 i \pi} \int_\Gamma \RR_\Lambda(z) dz = \text{Res}\left(\RR_\Lambda ; \lambda\right),
	\end{equation*}
	where $\Gamma$ is some closed path, encircling $\lambda$ and only $\lambda$ exactly once, and that does not meet the spectrum (a circle or any closed loop that can be continuously stretched within $\Rho(\Lambda)$ into a circle). It is well known that this operator is well defined and is a projector whose range satisfies the following inclusion
	\begin{equation*}
	\nul (\Lambda - \lambda) \subset \ran\left(\Pi_{\Lambda, \lambda} \right).
	\end{equation*}
	We call the left-hand side the \textit{geometric eigenspace} and the right-hand side the \textit{algebraic eigenspace}, and their dimensions are called respectively the \textit{geometric} and \textit{algebraic multiplicities}.
	\medskip\\
	When the algebraic multiplicity is finite, i.e. $\dim \ran\left(\Pi_{\Lambda, \lambda}\right) < \infty$, $\nul(\Lambda - \lambda) \neq~\{0\}$ and the spectral value $\lambda$ is called a \textit{discrete eigenvalue}, which we write $\lambda \in \spdis(\Lambda)$. 
	
	\medskip
	\noindent
	We will also denote by $\OO\left(\R^d\right)$ the set of $d \times d$ real orthogonal matrices, and denote the action of $O \in \OO(\R^d)$ on any function $f$ defined on~$\R^d_v$ by
	$$(Of)(v) := f(Ov).$$
	In particular, if $\Phi$ is the multiplication operator by a function $\phi=\phi(v)$, then~$O \Phi O^{-1}$ is the multiplication operator by $O\phi$.
	
	\medskip
	
	\subsubsection{Semigroup theory}
	\label{scn:sg_theory}
	For any $a \in \R$, we write $\Delta_a := \{\real z > a\}$, and for any~$\CC^0$-semigroup generator $\Lambda$, we write its semigroup $S_\Lambda(t)$.
	
	\medskip
	
	\section{General properties of the linearized operator}
	\setcounter{equation}{0}
	\setcounter{prop}{0}
	
	The linearized operator $\lin$ has been extensively studied in the space $L^2(M^{-1/2})$ by Hilbert \cite{Hilbert} and Grad \cite{Grad}, let us recall its main properties.
	
	\begin{theo}
		\label{thm:hilbert_decomposition}
		Denote $E = L^2(M^{-1/2})$ and $L = \lin_{|E}$. The operator $L$ is closed in~$E$, self-adjoint, dissipative and densely defined. It splits as
		\begin{equation}
		\label{eqn:hilbert_decomposition}
		L = -\nu + K,
		\end{equation}
		where $K$ is compact on $E$ and $\nu$ is a continuous function of $v \in \R^d$ defined by
		$$\nu(v) := \int_{\R^d_{v_*} \times \Sp^{d-1}_\sigma} M_* |v-v_*| dv_* d\sigma$$
		and satisfying for some $\nu_0, \nu_1 > 0$
		\begin{equation}
		\nu_0 \langle v \rangle \leq \nu(v) \leq \nu_1 \langle v \rangle.
		\end{equation}
		There exists a spectral gap for some $\ah_0 \in (0, \nu_0)$,
		$$\Sigma(L) \cap \Delta_{-\ah_0} = \{0\},$$
		where $\Delta_{-\ah_0} := \{\real z > -\ah_0\}$. The eigenvalue 0 is semi-simple and the null space of $L$, denoted $\NN$, is spanned by the following basis, orthogonal in $E$:
		\begin{align*}
		\begin{cases}
		\varphi_0(v) = M(v), \\
		\varphi_j(v) = v_j M(v),~ j=1,\dots,d, \\
		\displaystyle \varphi_{d+1}(v) = \left(|v|^2-d\right) M(v).
		\end{cases}
		\end{align*}
		Finally, for any $O \in \OO\left(\R^d\right)$, $O L = L O$.
	\end{theo}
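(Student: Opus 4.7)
The plan is to carry out the classical Hilbert--Grad analysis; essentially every assertion is well known and can be assembled from results of Hilbert, Grad, Carleman and Cercignani. I would break the proof into the following steps.

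First, I would write out the linearization explicitly: a change of variables $(v,v_*,\sigma) \to (v',v_*',\sigma)$, which is an involution with unit Jacobian and preserves $M M_* = M' M'_*$, yields the symmetric weak formulation
\begin{equation*}
\langle L f, g \rangle_E \;=\; -\frac14 \int (f' + f'_* - f - f_*)(g' + g'_* - g - g_*)\, M M_* |v-v_*|\, dv\, dv_*\, d\sigma
\end{equation*}
for Schwartz $f,g$. This immediately gives symmetry, dissipativity ($\langle Lf,f\rangle_E \le 0$), and shows that $\langle L f, f\rangle_E = 0$ iff $f/M$ is a collision invariant, i.e. $f \in \operatorname{Span}(\varphi_0,\dots,\varphi_{d+1})$; this identifies $\NN$. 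Rotational invariance $OL = LO$ follows from the rotation invariance of $|v-v_*|$, $M$ and the measure $d\sigma$.

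Next, I would decompose $L = -\nu + K$ by writing $Q(M,h) + Q(h,M)$ and splitting out the loss term. The multiplier $\nu(v)$ has the explicit Gaussian integral representation stated; computing it in spherical coordinates around $v$ gives $\nu(v) = |\Sp^{d-1}|\int_{\R^d} M_* |v-v_*|\, dv_*$, and elementary upper/lower bounds on $\int M_* |v-v_*|\, dv_*$ (using $|v-v_*| \le |v| + |v_*|$ for the upper bound, and restricting to $|v_*| \le |v|/2$ for the lower bound when $|v|$ is large, together with continuity and strict positivity for $|v|$ bounded) yield $\nu_0 \langle v\rangle \le \nu \le \nu_1 \langle v \rangle$. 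For the compactness of $K$, I would use Grad's classical argument: $K$ is an integral operator with an explicit kernel $k(v,v_*)$ that can be shown, after Carleman's change of variables, to be Hilbert--Schmidt on $E$ — or, more robustly, approximated in operator norm by Hilbert--Schmidt truncations, which is the standard Grad proof. I expect this compactness argument to be the longest technical step, although it is entirely standard, so I would just cite Grad.

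Having the decomposition, self-adjointness follows from the Kato--Rellich theorem: $-\nu$ is self-adjoint on its natural maximal multiplication domain $\dom(\nu) = \{f \in E : \nu f \in E\}$, and $K$ is bounded and symmetric, so $L = -\nu + K$ is self-adjoint on $\dom(\nu)$; in particular it is closed and densely defined. Weyl's theorem then gives $\spess(L) = \spess(-\nu) \subset (-\infty,-\nu_0]$, so $\Sigma(L) \cap \Delta_{-\nu_0}$ consists only of isolated eigenvalues of finite multiplicity. Dissipativity forces all these eigenvalues to be $\le 0$, and $0$ is one of them with eigenspace exactly $\NN$ by the kernel computation above. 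Since $L$ is self-adjoint on a Hilbert space, the eigenvalue $0$ is automatically semi-simple.

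The only remaining point is the existence of a strictly positive gap: one must show $0$ is isolated in $\Sigma(L)$. If not, there would be a sequence of eigenvalues $\lambda_n \uparrow 0$ with normalized eigenfunctions $f_n \perp \NN$; using $-\langle L f_n, f_n\rangle_E = -\lambda_n \to 0$ combined with the coercivity estimate on the orthogonal complement of $\NN$ (Grad's spectral gap estimate, valid for hard spheres since $\nu \gtrsim \langle v\rangle$ and $K$ is compact) would give a contradiction. This furnishes some $\ah_0 \in (0,\nu_0)$ with $\Sigma(L) \cap \Delta_{-\ah_0} = \{0\}$, completing the proof.
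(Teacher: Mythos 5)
The paper does not actually prove this theorem: it is presented as a recall of classical facts and attributed to Hilbert \cite{Hilbert} and Grad \cite{Grad}, with the explicit gap constant credited to \cite{BM}. Your proposal therefore supplies an argument where the paper gives a citation, and the overall structure you describe is the standard Hilbert--Grad route. Two points deserve correction.

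First, the symmetric weak formulation must involve $f/M$ rather than $f$: with the inner product $\langle f,g\rangle_E = \int f\,\bar g\, M^{-1}\,dv$ and $MM_* = M'M'_*$, the correct identity is
\begin{equation*}
\langle L f, g \rangle_E = -\frac14 \int \Bigl(\tfrac{f'}{M'} + \tfrac{f'_*}{M'_*} - \tfrac{f}{M} - \tfrac{f_*}{M_*}\Bigr)\Bigl(\tfrac{\bar g'}{M'} + \tfrac{\bar g'_*}{M'_*} - \tfrac{\bar g}{M} - \tfrac{\bar g_*}{M_*}\Bigr) M M_* |v-v_*|\, dv\, dv_*\, d\sigma .
\end{equation*}
Your later sentence (``iff $f/M$ is a collision invariant'') shows you know this, so it reads as a slip, but as written the displayed form is wrong and would not give the stated kernel.

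Second, the closing argument for the existence of the gap is circular as phrased: you assume a sequence $\lambda_n \uparrow 0$ and then invoke ``the coercivity estimate on the orthogonal complement of $\NN$ (Grad's spectral gap estimate)'' to derive a contradiction --- but that coercivity estimate \emph{is} the gap you are trying to establish. In fact no extra argument is needed once you have applied Weyl's theorem. Since $L$ is self-adjoint and $\spess(L) = \spess(-\nu) \subset (-\infty,-\nu_0]$, every spectral point of $L$ in $(-\nu_0, 0]$ is by definition an isolated eigenvalue of finite multiplicity, and for a self-adjoint operator the discrete spectrum can only accumulate on the essential spectrum. Hence $\Sigma(L) \cap (-\nu_0, 0]$ is a discrete subset without accumulation points in $(-\nu_0,0]$, so $0$ is automatically isolated and some $\ah_0 \in (0,\nu_0)$ works. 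If you prefer a compactness-based contradiction, the non-circular version takes normalized $f_n \perp \NN$ with $-\langle Lf_n,f_n\rangle_E \to 0$, extracts a weak limit $f$, uses compactness of $K$ together with $\nu \ge \nu_0 > 0$ to show $f\neq 0$ and $\nu f_n \to \nu f$ strongly, deduces $Lf = 0$, and then contradicts $f \perp \NN$; but invoking the coercivity estimate itself is not allowed at that stage. With these two corrections the proof sketch is sound.
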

	
	\begin{rem}
		The existence of this spectral gap has originally been proved using Weyl's theorem. However, C. Baranger and C. Mouhot provided in \cite[Theorem 1.1]{BM} an explicit estimate for $\ah_0$:
		\begin{equation*}
		\ah_0 \geq \frac{\pi}{48 \sqrt{2 e}} \cdot
		\end{equation*}
	\end{rem}
	
	Using this decomposition, R. Ellis and M. Pinsky \cite{EP} proved in the space $L^2\left(M^{-1/2}\right)$ the theorems stated in Section \ref{scn:goals}. However, this decomposition does not have the same nice properties in the larger spaces of the form $L^2\left(\langle v \rangle^k\right)$. M. Gualdani, S. Mischler and C. Mouhot \cite{GMM} introduced a new decomposition with similar properties which hold in both spaces and is presented in Lemma \ref{prop:def_lin_xi}.
	
	\subsection{Closedness and decomposition of \texorpdfstring{$\lin_{\xi}$}{Lξ}}
	\label{scn:closedness_decomposition}
	
	In this section, we present a decomposition of the linearized operator $\lin_\xi = \reg + \dis_\xi$,
	where in both spaces $L^2\left(M^{-1/2}\right)$ and $L^2\left( \langle v \rangle^k \right)$, $\reg$ boundedly maps its domain to
	$L^2\left(M^{-1/2}\right)$, $\dis_\xi + \ah$ is m-dissipative for some $\ah > 0$, and the multiplication operator by $v$ is
	$\lin_\xi$-bounded.
	
	The following lemma combines several results from \cite{GMM} that were used to prove the existence of a spectral gap for $\lin - v \cdot \nabla_x$ in a large class of Sobolev spaces $W^{s, p}_x W^{\sigma, q}_v$. We focus instead on $\lin_\xi$ in $L^2_v$ spaces, and also show the relative boundedness of the multiplication operator by $v$.
	
	\begin{lem}
		\label{prop:def_lin_xi}
		There exists some $k_* \geq 5/2$ such that for any $k > k_*$, the perturbed linearized Boltzmann operator splits as
		\begin{equation*}
		\lin_\xi = \dis_\xi + \reg = \dis - iv\cdot \xi + \AA,
		\end{equation*}
		where, denoting $\spc=L^2\left(M^{-1/2}\right)$ or $L^2\left( \langle v \rangle^k \right)$, the operator $\reg_{|\spc}$ is bounded from~$\spc$ to~$L^2\left(M^{-1/2}\right)$, $\left(\dis_\xi\right)_{|\spc}$ and thus $\left(\lin_\xi\right)_{|\spc}$ are closed in~$\spc$ with the common dense domain~$\{f \in \spc ~| ~ \nu f \in \spc \}$.
		
		\medskip
		\noindent
		Furthermore, there exist $C > 0$ and $\ah_1 \in (0, \nu_0)$ such that for any $\xi \in \R^d$
		\begin{gather*}
		\dis_\xi + \ah_1 \text{ is m-dissipative}, \\
		\|v f\|_\spc \leq C \left(\|\lin_{\xi} f\|_\spc + \|f\|_\spc\right), ~ f \in \dom(\lin_\xi).
		\end{gather*}
		Finally, for any $O \in \OO\left(\R^d\right)$,
		\begin{equation}
		\label{eqn:isometry_inv}
		O \lin_\xi = \lin_{O \xi}O.
		\end{equation}
		
	\end{lem}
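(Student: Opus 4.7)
The splitting is exactly the one from \cite{GMM}. Starting from Hilbert's decomposition $\lin = -\nu + K$ in Theorem \ref{thm:hilbert_decomposition}, I would define $\reg$ from $K$ by applying a smooth radial truncation $\Theta_R$ in the velocity variable together with a mollification of its kernel, chosen so that $\reg f$ is smooth and compactly supported in $v$ with $L^2$ norm controlled by the $\spc$-norm of $f$ regardless of the weight. This immediately yields $\reg \in \BBB(\spc, L^2(M^{-1/2}))$ for both $\spc = L^2(M^{-1/2})$ and $\spc = L^2(\langle v\rangle^k)$. Set $\dis := \lin - \reg = -\nu + \tilde K$ and $\dis_\xi := \dis - iv\cdot\xi$. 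The key input from \cite{GMM}, proved for $k > k_* \geq 5/2$, is that with $R$ chosen large enough the remainder $\tilde K$ is uniformly in $\spc$ \emph{small} with respect to $-\nu$, both in the Hermitian form sense
$$|\langle \tilde K f, f\rangle_\spc| \leq \eta\|\nu^{1/2}f\|^2_\spc + C_\eta\|f\|^2_\spc$$
and in the operator-norm sense
$$\|\tilde K f\|_\spc \leq \eta\|\nu f\|_\spc + C_\eta\|f\|_\spc,$$
for arbitrarily small $\eta$.

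\textbf{m-dissipativity and closedness.} The multiplication operator $-\nu - iv\cdot\xi$ is m-dissipative on $\{f\in\spc \mid \nu f\in\spc\}$, its real part being $-\nu \leq -\nu_0$. The second $\tilde K$-bound above has $\nu$-relative bound $\eta < 1$, so Kato's perturbation theorem yields that $\dis_\xi = (-\nu - iv\cdot\xi) + \tilde K$ is m-dissipative on the same domain, and in particular closed. To upgrade to m-dissipativity of $\dis_\xi + \ah_1$ for some $\ah_1 \in (0, \nu_0)$, the Hermitian $\tilde K$-bound gives $\mathrm{Re}\langle(\dis_\xi + \ah_1)f, f\rangle_\spc \leq -(1-\eta)\|\nu^{1/2}f\|^2_\spc + (\ah_1 + C_\eta)\|f\|^2_\spc$, which is $\leq 0$ once $\eta$ is small enough and $\ah_1 < (1-\eta)\nu_0 - C_\eta$ (using $\nu \geq \nu_0$). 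Closedness of $\lin_\xi = \dis_\xi + \reg$ then follows by bounded perturbation.

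\textbf{Relative boundedness of $v$, and rotation invariance.} The pointwise identity $|(\nu + iv\cdot\xi)f(v)|^2 = \nu(v)^2|f(v)|^2 + (v\cdot\xi)^2|f(v)|^2$ yields $\|\nu f\|_\spc \leq \|(\nu + iv\cdot\xi) f\|_\spc = \|\dis_\xi f - \tilde K f\|_\spc$. Using the operator-norm smallness of $\tilde K$ to absorb the resulting $\eta\|\nu f\|_\spc$ on the left, and writing $\dis_\xi f = \lin_\xi f - \reg f$ with $\reg$ bounded on $\spc$, one obtains $\|\nu f\|_\spc \lesssim \|\lin_\xi f\|_\spc + \|f\|_\spc$, and the relative bound for $v$ follows from $\nu \geq \nu_0\langle v\rangle$. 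For rotation invariance, choosing $\Theta_R$ radial makes $\reg$, and hence $\dis$, commute with every $O\in\OO(\R^d)$, inheriting $O\lin=\lin O$ from Theorem \ref{thm:hilbert_decomposition}. The elementary identity $O M_{v\cdot\xi} = M_{v\cdot(O^T\xi)} O$ combined with the bijectivity of $O\mapsto O^T$ on $\OO(\R^d)$ then yields \eqref{eqn:isometry_inv}.

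\textbf{Main obstacle.} The only non-routine ingredient is the smallness of $\tilde K$ with respect to $-\nu$ uniformly in $\spc$ when working in the polynomial space $L^2(\langle v\rangle^k)$. In the Gaussian case both bounds follow readily from the $\nu$-compactness of $K$ (via a standard truncation/mollification argument in the kernel); in $L^2(\langle v\rangle^k)$ the operator $K$ is no longer compact and one must instead rely on explicit pointwise kernel bounds for $K$ à la \cite{GMM}. This is precisely what fixes the threshold $k_*$ and prevents the statement from being purely a corollary of Hilbert's decomposition.
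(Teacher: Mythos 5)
Your overall strategy matches the paper's: both use the Gualdani--Mischler--Mouhot splitting $\lin = \AA_\delta + (-\nu + \overline{\BB}_\delta)$ (rather than Hilbert's $-\nu + K$), and the key input is indeed the $\nu$-relative boundedness of the regularized remainder in both weighted $L^2$ spaces. The closedness/m-dissipativity via relative boundedness, the relative bound for $v$ via the pointwise identity for $|\nu + iv\cdot\xi|$, and the rotation invariance from choosing the truncation radially are all in line with the paper's proof.

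However, there is a genuine inaccuracy in what you claim is available from \cite{GMM}, and it is precisely the point that determines $k_*$. You assert that the remainder $\tilde K$ is $\nu$-small with bounds of the form $\eta\|\nu f\| + C_\eta\|f\|$ ``for arbitrarily small $\eta$'' uniformly in $\spc$. This is false in the polynomial space. The GMM estimate for $\overline{\BB}_\delta$ is a \emph{homogeneous} weighted bound
$\|\overline{\BB}_\delta f\|_{L^2(\langle v\rangle^{q}\nu^{-1/2})} \leq b_\delta(q-1/2)\|f\|_{L^2(\langle v\rangle^{q}\nu^{1/2})}$
with no lower-order term $C_\eta$, and as $\delta \to 0$ the constant $b_\delta(q-1/2)$ converges to the \emph{fixed} positive number $b(q-1/2) = 4/\sqrt{(q+1/2)(q-5/2)}$ --- it does not tend to zero for fixed $q$. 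This limit is below $1$ exactly when $q > k_*$ where $b(k_*-1/2) = 1$, which is what defines the threshold. So the correct statement is that for $k > k_*$ and $\delta$ small one has $\eta = b_\delta(k-1/2) \in (b(k-1/2), 1)$, not $\eta$ arbitrarily small. You do flag the polynomial case as the ``main obstacle'' and correctly note that it fixes $k_*$, but the claim of arbitrary smallness contradicts that observation, and your m-dissipativity step is phrased conditionally on ``$\eta$ small enough and $\ah_1 < (1-\eta)\nu_0 - C_\eta$'', which --- if $C_\eta > 0$ and $\eta$ were not small --- could yield a vacuous conclusion. In fact the paper's argument works only because the bound is homogeneous ($C_\eta = 0$), giving directly $\ah_1 := \nu_0(1 - b_\delta(k-1/2)) > 0$ once $k > k_*$; you should make this explicit rather than appeal to smallness of $\eta$.
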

	
	\begin{proof}
		In \cite[section 4.3.3]{GMM}, the authors introduce a new splitting of the linearized operator~$\lin$, which allows to deal with polynomial weights:
		\begin{equation*}
		\lin = \AA_\delta + \BB_\delta = \AA_\delta + \left(-\nu + \overline{\BB}_\delta\right), ~ \delta \in (0, 1).
		\end{equation*}
		Here, $\AA_\delta$ is an integral operator with smooth compactly supported kernel, and $\overline{\BB}_\delta$ satisfies by \cite[Lemma 4.12, (4.40)]{GMM} the estimate
		\begin{gather}
		\label{eqn:rel_bound_dis}
		\left\|\overline{\BB}_\delta f\right\|_{L^2\left( \langle v \rangle^{q} \nu^{-1/2}\right)} \leq b_\delta(q-1/2) \left\|f\right\|_{L^2\left( \langle v \rangle^{q} \nu^{1/2} \right)}, ~ q > 5/2, \\
		\notag
		b_\delta(q) \underset{\delta \rightarrow 0}{\longrightarrow} b(q) := \frac{4}{\sqrt{(q+1)(q-2)}}, \\
		\notag
		\left\|\overline{\BB}_\delta\right\|_{\BBB\left(L^2\left(M^{-1/2}\right)\right)} \underset{\delta \rightarrow 0}{\longrightarrow} 0.
		\end{gather}
		We consider in this proof $k_*$ such that $b(k_*-1/2) = 1$ and fix some~$k > k_*$ so that~$b(k-1/2) < 1$. We also consider $\delta > 0$ to be small enough so that 
		\begin{align*}
		\ah_1 &:= \nu_0 - \left\|\overline{\BB}_\delta\right\|_{\BBB(L^2\left(M^{-1/2}\right))} > 0,&& \text{if}~ \spc = L^2\left(M^{-1/2}\right),\\
		\ah_1 &:= \nu_0 \left(1 - b_\delta(k-1/2)\right) > 0,&& \text{if}~\spc = L^2\left( \langle v \rangle^k \right).
		\end{align*}
		
		\step{1}{Boundedness and closedness at $\xi=0$}	
		As $\AA_\delta$ is an integral operator with a bounded and compactly supported kernel, it is clear that for any of the two spaces~$\spc = L^2\left(M^{-1/2}\right), L^2\left( \langle v \rangle^{k} \right)$, this operator is bounded from $\spc$ to $L^2\left(M^{-1/2}\right)$.
		
		When $\spc=L^2\left(M^{-1/2}\right)$, $\BB_\delta$ is the sum of a closed and a bounded operator, so it is closed and densely defined.
		
		When $\spc=L^2\left( \langle v \rangle^k \right)$, note \footnote{see for instance Remark 4.1 of \cite{GMM}} that $\nu_0 > 1$, which combined with \eqref{eqn:rel_bound_dis} implies that~$\overline{\BB}_\delta$ is $\nu$-bounded, with $\nu$-bound equal to $b_\delta(k) < 1$. Hence $\BB_\delta$ is closed on~$L^2\left( \langle v \rangle^{k} \right)$ by \cite[Theorem IV-1.1]{Kato}.
		
		In both cases, $\BB_\delta$ and thus $\lin$ are closed and defined on the dense domain
		\begin{equation*}
		\dom(\BB_\delta)=\dom(\lin)=\{f \in \spc ~| ~ \nu f \in \spc\}.
		\end{equation*}
		
		\step{2}{Dissipativity estimates}
		By the definition of $\ah_1$, $\BB_\delta + \ah_1$ is dissipative on~$L^2\left(M^{-1/2}\right)$.
		
		\medskip
		\noindent
		In the polynomial space, we have from \eqref{eqn:rel_bound_dis} that
		\begin{equation*}
		\left| \left\langle \overline{\BB}_\delta f, f \right\rangle_{L^2\left( \langle v \rangle^k \right)} \right| \leq b_\delta(k-1/2) \left\|\nu^{1/2} f\right\|_{L^2\left(\langle v \rangle^k\right)}^2.
		\end{equation*}
		Thus, by the definition of $\ah_1$, we have
		\begin{align*}
		\langle \BB_\delta f, f \rangle_{L^2\left(\langle v \rangle^k\right)} \leq \left(1 - b_\delta(k)\right)\|\nu^{1/2} f\|_{L^2\left(\langle v \rangle^k\right)}^2 \leq -\ah_1 \|f\|_{L^2\left(\langle v \rangle^k\right)},
		\end{align*}
		which yields the dissipativity of $\BB_\delta+\ah_1$ on $L^2\left( \langle v \rangle^k \right)$.
		
		\medskip
		\noindent
		
		\step{3}{Relative bound and closedness of $\lin_\xi$ and $\BB_\xi$}
		First, let us show that $\nu+i v \cdot \xi$ is~$\lin_\xi$-bounded uniformly in~$\xi \in \R^d$:
		\begin{align*}
		\|(\nu+ i v \cdot \xi)f\|_\spc & \leq \|\BB_\xi f\|_\spc + \left\|\overline{\BB}_\delta f\right\|_\spc \\
		& \leq \|\BB_\xi f\|_\spc + \beta \|\nu f\|_\spc \\
		& \leq \|\BB_\xi f\|_\spc + \beta \|(\nu + iv \cdot \xi) f\|_\spc
		\end{align*}
		where $\beta = b_\delta(k)$ for $\spc=L^2 \left( \langle v \rangle^k \right)$, and $\beta=\left\|\overline{\BB}_\delta\right\|_{\BBB\left(L^2\left(M^{-1/2}\right)\right)}$ for $\spc=L^2\left(M^{-1/2}\right)$. In both cases, we assume $\delta$ to be small enough so that $\beta \in (0, 1)$, which allows to write
		\begin{align*}
		\|(\nu+i v \cdot \xi)f\|_\spc \leq (1-\beta)^{-1} \|\BB_\xi f\|_\spc.
		\end{align*}
		We can now show the perturbation $v$ is $\lin_{\xi}$-bounded:
		\begin{align*}
		\|v f\|_\spc &\leq \nu_0^{-1} \|(\nu + i v \cdot \xi)f\|_\spc \\
		& \leq \frac{1}{\nu_0(1-\beta)} \|\BB_\xi f\|_\spc \\
		& \leq \frac{1}{\nu_0(1-\beta)} \left(\|\lin_{\xi} f\|_\spc + \|\AA_\delta f\|_\spc \right).
		\end{align*}
		We thus have a control $\|v f\|_\spc \leq C\left(\|\lin_{\xi} f\|_\spc + \|f\|_\spc\right)$, where $C=C(\spc, \delta)$. Thanks to this uniform bound, we know (again, by \cite[IV-1.1]{Kato}) that for any~$\xi_0 \in \R^d$ and~$\xi \in \R^d$ satisfying $|\xi-\xi_0| < 1/C$, $\lin_{\xi}$ is closed if $\lin_{\xi_0}$ is. Since $\lin$ is closed, we deduce that $\lin_{\xi}$ is closed for all $\xi \in \R^d$. By the same reasoning and the second line of the previous sequence of estimates, we can show that $\BB_\xi$ is closed for all $\xi \in \R^d$.
		
		Finally, $\dis_\xi+\ah_1$ is m-dissipative because $\dis_\xi$ is boundedly invertible:
		$$\dis_\xi = \left(-1 + \overline{\BB_\delta} \left(\nu + i v \cdot \xi\right)^{-1}\right)\left(\nu + i v \cdot \xi\right),$$
		where $\left\|\overline{\BB_\delta} \left(\nu + i v \cdot \xi\right)^{-1}\|_{\BBB(\spc)} \leq \beta \|\frac{\nu}{\nu + i v \cdot \xi}\right\|_{\BBB(\spc)} = \beta < 1$. Lemma \ref{prop:def_lin_xi} is proved.
	\end{proof}
	
	\begin{notation}
		\label{not:spc_ah}
		In the rest of this paper, we fix some $k > k_*$ and denote the functional spaces
		\begin{equation*}
		\spcp:=L^2\left(\langle v \rangle^k  \right), ~
		\spcg := L^2\left(M^{-1/2}\right), ~ \spc = \spcg \text{ or } \spcp.
		\end{equation*}
		When considering $\lin_\xi$ on one of these spaces, we denote its resolvents
		\begin{gather*}
		\RR(\lambda, \xi) := \left(\lambda - \lin_\xi\right)^{-1}, \\
		\RR(\lambda) := \left(\lambda - \lin\right)^{-1} = \RR(\lambda, 0).
		\end{gather*}
		We also fix some $\ah \in \left(0, \min\{\ah_0, \ah_1\}\right)$, where $\ah_0$ is that of Theorem \ref{thm:hilbert_decomposition} and $\ah_1$ of Lemma \ref{prop:def_lin_xi}.
	\end{notation}
	
	\subsection{Spectral gap properties of \texorpdfstring{$\lin_{\xi}$}{Lξ}}
	\label{scn:spetral_gaps}
	In this section, we show the existence of spectral gaps uniform in $\xi \in \R^d$. More precisely, for small $\xi$, $\Delta_{-\ah} := \{\real z > -\ah\}$ contains a finite amount of eigenvalues converging to zero, and lying in the interior of a fixed closed path $\Gamma$. For large $\xi$, the half plane $\Delta_{-\beh} = \{\real z > -\beh\}$ contains no spectral value, for some constant $\beh > 0$.
	
	After establishing some basic results on the resolvent $\RR(\lambda, \xi)$ (Proposition \ref{prop:analicity_resolvent}), we prove a spectral gap property (Proposition \ref{prop:enlargement}) using the decomposition from Theorem \ref{thm:hilbert_decomposition} for the case $\spc = \spcg$ and an enlargement result from \cite{GMM} to extend it to the case $\spc=\spcp$. The eigenvalues on the right-hand side of this gap are shown to be separated from the rest of the spectrum by a closed path $\Gamma$ (Lemma \ref{prop:isolation_eigenvalues}). We conclude by proving the spectral gap property for large $\xi$.
	
	\begin{prop}
		\label{prop:analicity_resolvent}
		Let $\spc$ be one of the spaces $\spcg$ or $\spcp$.
		For any $\lambda_0 \in \Rho(\lin_{\xi_0})$, we have that~$v \RR(\lambda_0, \xi_0) \in \BBB(\spc)$ and the following expansion around $(\lambda_0, \xi_0)$ holds:
		\begin{gather}
		\label{eqn:second_neumann_exp}
		\RR(\lambda, \xi) = \RR(\lambda_0, \xi_0) \sum_{n=0}^\infty \Big[ (\lambda_0-\lambda)\RR(\lambda_0, \xi_0) + i v \cdot (\xi_0-\xi)\RR(\lambda_0, \xi_0)\Big]^n,
		\end{gather}
		whenever $|\xi-\xi_0| \left\|v\RR(\lambda_0, \xi_0)\right\|_{\BBB(\spc)} + |\lambda-\lambda_0| \left\|\RR(\lambda_0, \xi_0)\right\|_{\BBB(\spc)} < 1$. In particular, the resolvent is continuous on the following set, which is open:$$\left\{(\lambda, \xi) \in \Comp \times \R^d ~| ~\lambda \in \Rho(\lin_\xi)\right\}.$$
		
	\end{prop}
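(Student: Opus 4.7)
My plan is to build the expansion \eqref{eqn:second_neumann_exp} as a two-parameter Neumann series around $(\lambda_0, \xi_0)$, the critical ingredient being the boundedness of $v \RR(\lambda_0, \xi_0)$ on $\spc$. I would first establish this boundedness using Lemma \ref{prop:def_lin_xi}: since the multiplication by $v$ is $\lin_\xi$-bounded with a constant $C = C(\spc, \delta)$ independent of $\xi$, applying this estimate to $g = \RR(\lambda_0, \xi_0) f$ and inserting $\lin_{\xi_0} g = \lambda_0 g - f$ yields an $\spc$-operator bound for $v\RR(\lambda_0, \xi_0)$ controlled by $1 + (|\lambda_0|+1)\|\RR(\lambda_0, \xi_0)\|_{\BBB(\spc)}$. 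As a consequence the perturbation $iv \cdot (\xi_0 - \xi)\RR(\lambda_0, \xi_0)$ is bounded on $\spc$ with operator norm controlled by $|\xi - \xi_0|\, \|v\RR(\lambda_0, \xi_0)\|_{\BBB(\spc)}$, which is what unlocks the expansion in the $\xi$-variable.

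With $v\RR(\lambda_0, \xi_0) \in \BBB(\spc)$ in hand, I would write on the common domain $\dom(\lin_\xi) = \dom(\lin_{\xi_0})$ (which is $\xi$-invariant by Lemma \ref{prop:def_lin_xi}) the identity
$$
\lambda - \lin_\xi = (\lambda_0 - \lin_{\xi_0}) + (\lambda - \lambda_0) + iv \cdot (\xi - \xi_0),
$$
and multiply on the right by $\RR(\lambda_0, \xi_0)$ to obtain
$$
(\lambda - \lin_\xi) \RR(\lambda_0, \xi_0) = I - S(\lambda, \xi), \qquad S(\lambda, \xi) := \bigl[(\lambda_0 - \lambda) + iv \cdot (\xi_0 - \xi)\bigr] \RR(\lambda_0, \xi_0),
$$
as operators on $\spc$. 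Under the smallness assumption of the proposition, the first step gives $\|S(\lambda, \xi)\|_{\BBB(\spc)} < 1$, so $I - S$ is boundedly invertible on $\spc$ by the geometric series, and $\RR(\lambda_0, \xi_0)(I - S)^{-1}$ is a bounded right inverse of $\lambda - \lin_\xi$.

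To upgrade this to $\lambda \in \Rho(\lin_\xi)$ and hence to the formula \eqref{eqn:second_neumann_exp}, I would use the dual factorization $\lambda - \lin_\xi = (I - S)(\lambda_0 - \lin_{\xi_0})$ on $\dom(\lin_\xi)$, which shows that $\RR(\lambda_0, \xi_0)(I - S)^{-1}$ is also a left inverse and is therefore the resolvent. Openness of $\{(\lambda, \xi) \in \Comp \times \R^d : \lambda \in \Rho(\lin_\xi)\}$ and continuity of the resolvent then follow immediately: each such point admits a neighborhood on which the smallness condition holds, the map $(\lambda, \xi) \mapsto S(\lambda, \xi)$ is affine and hence operator-norm continuous, and the inversion $T \mapsto (I - T)^{-1}$ is continuous in operator norm on the open ball $\{\|T\|_{\BBB(\spc)} < 1\}$.

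The only non-routine step is the $\spc$-boundedness of $v \RR(\lambda_0, \xi_0)$; this is precisely what the relative-boundedness statement in Lemma \ref{prop:def_lin_xi} was designed to provide, and everything else is a standard Neumann argument for the relatively bounded perturbation $-i v \cdot (\xi - \xi_0)$ of $\lin_{\xi_0}$. The fact that the relative bound in Lemma \ref{prop:def_lin_xi} is uniform in $\xi$ is what allows the two small parameters $|\lambda - \lambda_0|$ and $|\xi - \xi_0|$ to enter the stated threshold with coefficients depending only on $(\lambda_0, \xi_0)$, and is ultimately the reason the very same expansion applies verbatim in both $\spcg$ and $\spcp$.
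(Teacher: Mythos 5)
Your proposal is correct and follows essentially the same route as the paper: establish $v\RR(\lambda_0,\xi_0)\in\BBB(\spc)$ from the uniform $\lin_\xi$-bound on $v$ in Lemma \ref{prop:def_lin_xi} via the identity $\lin_{\xi_0}\RR(\lambda_0,\xi_0)=-1+\lambda_0\RR(\lambda_0,\xi_0)$, then factor $\lambda-\lin_\xi$ through $\lambda_0-\lin_{\xi_0}$ and invert by a Neumann series. Your extra care in checking that $\RR(\lambda_0,\xi_0)(I-S)^{-1}$ is a two-sided inverse is a welcome but minor elaboration of what the paper leaves implicit.
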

	
	\begin{proof}
		Recall that for some $C > 0$, we have
		$$\|v f\|_\spc \leq C\left(\|\lin_{\xi} f\|_\spc + \|f\|_\spc\right), ~ f \in \dom(\lin_\xi).$$
		We deduce that for any $(\lambda_0, \xi_0) \in \Comp \times \R^d$ such that $\lambda_0 \in \Rho(\lin_{\xi_0})$
		$$\|v \RR(\lambda_0, \xi_0) f\|_\spc \leq C\left(\|\lin_{\xi_0} \RR(\lambda_0, \xi_0)f\|_\spc + \|\RR(\lambda_0, \xi_0)f\|_\spc\right).$$
		Rewriting $\lin_{\xi_0} \RR(\lambda_0, \xi_0) = -1 + \lambda_0  \RR(\lambda_0, \xi_0)$, we have for some constant $C'=C'(\lambda_0, \xi_0)$ that
		\begin{equation}
		\label{eqn:perturbation_resolvent_bound}
		\|v \RR(\lambda_0, \xi_0) f\|_\spc \leq C'\|f\|_\spc.
		\end{equation}
		This means that $v \RR(\lambda_0, \xi_0) \in \BBB(\spc)$. Now, rewrite the resolvent as
		\begin{align*}
		\RR(\lambda, \xi) &= \left(\lambda - \lin_\xi \right)^{-1} \\
		&=\left(\lambda_0- \lin_{\xi_0} - (\lambda_0-\lambda) - i v \cdot(\xi_0 - \xi)\right)^{-1} \\
		&= \RR(\lambda_0, \xi_0) \left(1 - (\lambda_0-\lambda) \RR(\lambda_0, \xi_0) - i (\xi_0-\xi) \cdot v \RR(\lambda_0, \xi_0)\right)^{-1}
		\end{align*}
		whenever $(\lambda, \xi)$ is close enough to $(\lambda_0, \xi_0)$. In such a case, we have the Neumann expansion \eqref{eqn:second_neumann_exp}.
	\end{proof}
	
	\begin{prop}
		\label{prop:enlargement}
		Denote $\ling_\xi = \left(\lin_\xi\right)_{| \spcg}$ and $\linp_\xi = \left(\lin_\xi\right)_{| \spcp}$.
		For any $\xi \in \R^d$, the following set consists of a finite amount of discrete eigenvalues: $$D := \Sigma(\ling_\xi) \cap \overline{\Delta_{-\ah}} = \Sigma(\linp_\xi) \cap \overline{\Delta_{-\ah}},$$ 
		and for any such eigenvalue $\lambda \in D$, we have
		\begin{gather*}
		\left(\Pi_{\linp_\xi, \lambda}\right)_{|\spcg}= \Pi_{\ling_\xi, \lambda},\\
		\notag\ran\left(\Pi_{\linp_\xi, \lambda}\right)=\ran\left(\Pi_{\ling_\xi, \lambda}\right), \\
		\notag\nul(\linp_\xi - \lambda)=\nul(\ling_\xi - \lambda).
		\end{gather*}
		Finally, the following factorization formula holds on $\overline{\Delta_{-\ah}} - D$:
		\begin{equation}
		\label{eqn:fact_interspace}
		\RR_{\linp_\xi} = \RR_{\BB_\xi} + \RR_{\ling_\xi} \AA \RR_{\BB_\xi}.
		\end{equation}
	\end{prop}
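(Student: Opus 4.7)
The plan is to first establish the spectral structure of $\ling_\xi$ in the Gaussian space via a Weyl-type argument combined with a Neumann-series estimate, then transfer it to the polynomial space using the factorization formula, and finally identify projectors and eigenspaces via contour integration.

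For the Gaussian case, I would exploit the decomposition $\ling_\xi = \BB_\xi + \AA$ from Lemma~\ref{prop:def_lin_xi}. Since $\BB_\xi + \ah_1$ is m-dissipative, $\Sigma(\BB_\xi) \subset \{\real z \leq -\ah_1\}$; since $\AA = \AA_\delta$ is an integral operator with smooth compactly supported kernel, it is Hilbert--Schmidt, hence compact on $\spcg$. Weyl's theorem on the stability of the essential spectrum under compact perturbation then traps the essential spectrum of $\ling_\xi$ inside $\{\real z \leq -\ah_1\}$, which lies strictly to the left of $\overline{\Delta_{-\ah}}$ because $\ah < \ah_1$; hence every spectral value in $\overline{\Delta_{-\ah}}$ is discrete. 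Writing $\RR_{\ling_\xi}(\lambda) = \RR_{\BB_\xi}(\lambda)\bigl(I - \AA\RR_{\BB_\xi}(\lambda)\bigr)^{-1}$ and invoking the dissipative estimate on $\BB_\xi$, one checks that for $|\lambda|$ large in $\overline{\Delta_{-\ah}}$ the second factor is invertible by a Neumann series, so the discrete eigenvalues cannot accumulate at infinity and $D$ is finite.

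For the transfer step, fix $\lambda \in \overline{\Delta_{-\ah}} \setminus D$. By Lemma~\ref{prop:def_lin_xi}, $\BB_\xi + \ah_1$ is m-dissipative on $\spcp$ as well, so $\RR_{\BB_\xi}(\lambda) \in \BBB(\spcp)$; moreover $\AA \in \BBB(\spcp, \spcg)$ and $\RR_{\ling_\xi}(\lambda) \in \BBB(\spcg)$ by the previous step. Hence $T(\lambda) := \RR_{\BB_\xi}(\lambda) + \RR_{\ling_\xi}(\lambda)\AA\RR_{\BB_\xi}(\lambda)$ is well-defined and bounded on $\spcp$ via the continuous embedding $\spcg \hookrightarrow \spcp$. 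Using $\lin_\xi = \BB_\xi + \AA$ and the standard resolvent identity $(\lambda - \BB_\xi)\RR_{\ling_\xi} = I + \AA\RR_{\ling_\xi}$ on $\spcg$, a direct computation gives $(\lambda - \linp_\xi)T(\lambda) = T(\lambda)(\lambda - \linp_\xi) = I$, which simultaneously proves \eqref{eqn:fact_interspace} and $\lambda \in \Rho(\linp_\xi)$. The reverse inclusion $D \subseteq \Sigma(\linp_\xi)$ is immediate from the embedding $\spcg \hookrightarrow \spcp$, since any eigenfunction in $\spcg$ remains an eigenfunction in $\spcp$ with the same eigenvalue.

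For the projectors and eigenspaces, pick $\lambda \in D$ and a small circle $\Gamma_\lambda$ around $\lambda$ separating it from the rest of $\Sigma(\ling_\xi) = \Sigma(\linp_\xi)$, on which \eqref{eqn:fact_interspace} is valid. Integrating, the $\RR_{\BB_\xi}$ term is annihilated by holomorphy inside $\Gamma_\lambda$, yielding $\Pi_{\linp_\xi, \lambda} = \frac{1}{2i\pi} \oint_{\Gamma_\lambda} \RR_{\ling_\xi}(z)\AA\RR_{\BB_\xi}(z)\,dz$. Combining with the analogous factorization applied to $\RR_{\ling_\xi}$ on $\spcg$ gives $(\Pi_{\linp_\xi, \lambda})_{|\spcg} = \Pi_{\ling_\xi, \lambda}$; the same integral expression shows $\ran(\Pi_{\linp_\xi, \lambda}) \subset \spcg$, so idempotency together with the restriction identity just proved forces $\Pi_{\linp_\xi, \lambda}f = \Pi_{\ling_\xi, \lambda}(\Pi_{\linp_\xi, \lambda}f)$ and hence range equality. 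For null-spaces, any $f \in \nul(\linp_\xi - \lambda)$ satisfies $f = \RR_{\BB_\xi}(\lambda)\AA f \in \spcg$ because $\AA f \in \spcg$ and $\RR_{\BB_\xi}(\lambda)$ preserves $\spcg$. The main technical hurdle I anticipate is making the Weyl-type argument in $\spcg$ fully uniform in $\xi$ and establishing the large-$|\lambda|$ Neumann estimate quantitatively enough to rule out accumulation of eigenvalues; both rely crucially on $\AA$ being $\xi$-independent and on the m-dissipativity bound on $\BB_\xi$, which is itself $\xi$-uniform by Lemma~\ref{prop:def_lin_xi}.
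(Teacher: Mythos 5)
Your overall architecture is sound and in fact reproduces more than the paper does: the paper delegates the entire transfer step (the identity $D = \Sigma(\linp_\xi) \cap \overline{\Delta_{-\ah}}$, the agreement of projectors and null spaces, and the factorization \eqref{eqn:fact_interspace}) to an ``enlargement theorem'' \cite[Theorem 2.1]{GMM}, whose hypotheses are verified by Lemma~\ref{prop:def_lin_xi}; you instead re-derive that theorem by hand, which is legitimate but longer. Your Weyl argument for discreteness in $\spcg$ (using $\AA$ compact) is also a valid variant; the paper uses the Hilbert--Grad splitting $L = -\nu + K$ with $K$ compact and the Kato stability theorem, which is equivalent in spirit. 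Your null-space and contour-integral computations are correct.

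The genuine gap is in the finiteness of $D$. You assert that for $|\lambda|$ large in $\overline{\Delta_{-\ah}}$ the operator $I - \AA\RR_{\BB_\xi}(\lambda)$ is invertible by a Neumann series, ``invoking the dissipative estimate on $\BB_\xi$.'' But m-dissipativity of $\BB_\xi + \ah_1$ yields only $\|\RR_{\BB_\xi}(\lambda)\| \leq (\real\lambda + \ah_1)^{-1}$, which on $\overline{\Delta_{-\ah}}$ is merely bounded by $(\ah_1 - \ah)^{-1}$ and does \emph{not} decay as $|\ima\lambda| \to \infty$. So $\|\AA\RR_{\BB_\xi}(\lambda)\|$ is bounded by a constant that need not be $< 1$, and the Neumann series may diverge for all $\lambda$ in the strip; indeed, were the crude bound sufficient, you would erroneously conclude $\Sigma(\lin) \cap \overline{\Delta_{-\ah}} = \emptyset$, contradicting $0 \in \Sigma(\lin)$. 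What is needed to exclude eigenvalues with large $|\ima\lambda|$ is a Riemann--Lebesgue type decay exploiting the precise kernel structure; this is exactly the Ukai--Yang estimate \eqref{eqn:est_k_nu}, $\sup_{\sigma \geq -\nu_0+\delta}\|K\RR_{-(\nu + i v\cdot\xi)}(\sigma+i\tau)\|_{\BBB(\spcg)} \to 0$ as $|\xi|+|\tau|\to\infty$, which the paper imports and which does not follow from the abstract m-dissipativity bound you cite. You correctly flag this as ``the main technical hurdle,'' but the tools you propose (m-dissipativity of $\BB_\xi$ plus $\xi$-independence of $\AA$) are not sufficient to close it.
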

	
	\begin{rem}
		The previous proposition means that $\Rho(\lin_\xi) \cap \overline{\Delta_{-\ah}}$, $\Pi_{\lin_\xi, \lambda}$, $\ran\left(\Pi_{\lin_\xi, \lambda}\right)$ and $\nul(\lin_\xi - \lambda)$ can be considered without ambiguity on the space we are working with (the spectral projectors can be restricted to $\spcg$ or extended to $\spcp$ by density).
	\end{rem}
	
	\begin{proof}[Proof of Proposition \ref{prop:enlargement}]
		This is a direct application of \cite[Theorem 2.1]{GMM} whose assumptions are met by Lemma \ref{prop:def_lin_xi}, except for the fact that $\Sigma(\ling_\xi) \cap \Delta_{-\ah}$ is made up of a finite amount of discrete eigenvalues, which is proven below.
		
		For any $\lambda \in \Delta_{-\nu_0}$ such that $\left\|K \RR_{-(\nu + i v \cdot \xi)}(\lambda)\right\|_{\BBB(\spcg)} < 1$, with $K$ from Theorem~\ref{thm:hilbert_decomposition}, the following factorization holds:
		\begin{equation}
		\label{eqn:fact_ling_xi_lambda}
		\RR_{L_\xi}(\lambda) = \RR_{-(\nu + i v \cdot \xi)}(\lambda) \left(1 - K \RR_{-(\nu + i v \cdot \xi)}(\lambda)\right)^{-1}.
		\end{equation}
		The following lemma from S. Ukai and T. Yang \cite[Proposition 2.2.6]{UY} allows to get such estimates for $K \RR_{-(\nu + i v \cdot \xi)}$.
		
		\begin{lem}
			For any $\delta > 0$, we have
			\begin{equation}
			\label{eqn:est_k_nu}
			\sup_{\sigma \geq -\nu_0 + \delta} \left\|K \RR_{-(\nu + i v \cdot \xi)}(\sigma+i \tau)\right\|_{\BBB(\spcg)} \underset{|\xi| + |\tau| \to \infty}{\longrightarrow} 0.
			\end{equation}
		\end{lem}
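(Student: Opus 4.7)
The plan is to reduce the estimate to the behavior of the multiplication operator $\RR_{-(\nu+iv\cdot\xi)}(\sigma+i\tau)$, which acts on $\spcg$ as multiplication by
$$m_{\xi,\tau,\sigma}(v) := \bigl[(\sigma+\nu(v))+i(\tau+v\cdot\xi)\bigr]^{-1},$$
and then exploit the compactness of $K$ guaranteed by Theorem \ref{thm:hilbert_decomposition}. Since $\sigma+\nu(v)\geq \delta>0$ on the range of $\sigma$ considered, one has the uniform pointwise bound
$$\sup_{\sigma\geq -\nu_0+\delta}\,|m_{\xi,\tau,\sigma}(v)|^2\leq \frac{1}{\delta^2+(\tau+v\cdot\xi)^2},$$
and in particular $\|\RR_{-(\nu+iv\cdot\xi)}(\sigma+i\tau)\|_{\BBB(\spcg)}\leq 1/\delta$, uniformly in all parameters.

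Given $\eps>0$, compactness of $K$ on $\spcg$ allows me to fix a finite-rank approximation $K_\eps f=\sum_{j=1}^N \langle f,g_j\rangle_\spcg\, e_j$ with $\|K-K_\eps\|_{\BBB(\spcg)}<\eps$. Cauchy--Schwarz applied componentwise then yields
$$\|K\RR_{-(\nu+iv\cdot\xi)}(\sigma+i\tau)\|_{\BBB(\spcg)}\leq \frac{\eps}{\delta}+\sum_{j=1}^N \|m_{\xi,\tau,\sigma}g_j\|_\spcg\,\|e_j\|_\spcg,$$
so matters reduce to showing that for each fixed $g\in\spcg$,
$$\sup_{\sigma\geq -\nu_0+\delta}\|m_{\xi,\tau,\sigma}\,g\|_\spcg^2\leq \int_{\R^d}\frac{|g(v)|^2 M(v)^{-1}}{\delta^2+(\tau+v\cdot\xi)^2}\,dv\xrightarrow[|\xi|+|\tau|\to\infty]{} 0,$$
after which letting $\eps\to 0$ concludes.

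The last convergence will follow from dominated convergence (with integrable dominant $|g|^2 M^{-1}/\delta^2$) once one checks that for Lebesgue-a.e.\ $v\in\R^d$, $(\tau_n+v\cdot\xi_n)^2\to\infty$ along every sequence $(\xi_n,\tau_n)$ with $|\xi_n|+|\tau_n|\to\infty$. I expect this measure-theoretic step to be the main (though standard) subtlety. I would argue by successive subsequence extraction: if $|\xi_n|$ stays bounded then $|\tau_n|\to\infty$ and $|\tau_n+v\cdot\xi_n|\geq|\tau_n|-|v|\sup_n|\xi_n|\to\infty$ for every $v$; otherwise, extracting further, I may assume $\xi_n/|\xi_n|\to\omega\in\Sp^{d-1}$ and $\tau_n/|\xi_n|\to c\in[-\infty,\infty]$, so that when $c$ is finite one has $|\tau_n+v\cdot\xi_n|/|\xi_n|\to|c+v\cdot\omega|$, which is nonzero off the hyperplane $\{v\cdot\omega=-c\}$ of measure zero, while when $c=\pm\infty$ the divergence is immediate. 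Propagating the a.e.\ limit to the full sequence via the usual subsequence principle concludes.
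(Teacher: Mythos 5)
The paper does not actually prove this lemma: it simply cites Ukai--Yang \cite[Proposition 2.2.6]{UY} and uses the statement as a black box. So your proof is a self-contained alternative rather than a competing argument, and it is correct.

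Your strategy --- write the resolvent as multiplication by $m_{\xi,\tau,\sigma}=[(\sigma+\nu)+i(\tau+v\cdot\xi)]^{-1}$, get the $\sigma$-uniform pointwise bound $|m|^2\leq(\delta^2+(\tau+v\cdot\xi)^2)^{-1}$, approximate the compact $K$ by a finite-rank $K_\eps$ and thereby reduce to $\|m\,g\|_\spcg\to 0$ for fixed $g$, and finish by dominated convergence --- is sound at every step, and has the virtue of using only abstract compactness (not the explicit Hilbert--Schmidt structure of the collision kernel, which is what explicit computations in the literature typically lean on). One small point of exposition: the sentence claiming ``for Lebesgue-a.e.\ $v$, $(\tau_n+v\cdot\xi_n)^2\to\infty$ along every sequence'' is not literally what you prove, nor is it needed (and for some sequences the pointwise limit can fail on a large set, e.g.\ if $\xi_n/|\xi_n|$ equidistributes on the sphere while $\tau_n\equiv 0$). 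What you actually establish, and all that dominated convergence requires, is the correct subsequence statement: from any sequence with $|\xi_n|+|\tau_n|\to\infty$ one extracts a sub-subsequence along which $(\tau_n+v\cdot\xi_n)^2\to\infty$ off a hyperplane (hence a.e.), so $\int h_n\to 0$ along the sub-subsequence; the subsequence principle then gives convergence of the full sequence of integrals. You do carry this out; it would just be cleaner to state the goal as ``$\int h_n\,dv\to 0$ for every such sequence'' rather than as an a.e.\ pointwise claim. The rest --- the identity $\|\RR_{-(\nu+iv\cdot\xi)}\|\leq 1/\delta$, the reduction $\|K\RR\|\leq \eps/\delta+\sum_j\|m g_j\|\,\|e_j\|$, and the dominated-convergence integrand $|g|^2M^{-1}/\delta^2\in L^1$ because $g\in\spcg$ --- is accurate.
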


		Therefore, by estimate \eqref{eqn:est_k_nu}, for any $\xi \in \R^d$, there exists $T_\xi > 0$ such that we have $\overline{\Delta_{-\ah}} \cap \{|\ima z| \geq T_\xi \} \subset \Rho(\ling_\xi)$. Furthermore, as $\ling$ is a non-positive self-adjoint operator according to Theorem~\ref{thm:hilbert_decomposition}, and $iv \cdot \xi$ is skew-symmetric, $\Sigma\left(\ling_\xi\right) \subset (-\infty, 0]$, and thus $$\Sigma(\ling_\xi) \cap \overline{\Delta_{-\ah}} \subset [-\ah, 0] + i [-T_\xi, T_\xi].$$
		However, as $\ling_\xi$ is the sum of a compact operator and the multiplication operator by~$\nu + i v \cdot \xi$, whose range does not meet $\overline{\Delta_{-\ah}}$, \cite[Theorem IV-5.35]{Kato} tells us that~$\Sigma(\ling_\xi) \cap \overline{\Delta_{-\ah}} \subset \spdis(\ling_\xi)$.
		
		In conclusion, $\Sigma(\ling_\xi) \cap \overline{\Delta_{-\ah}}$ is a compact discrete set, thus finite, which yields the conclusion.
	\end{proof}
	
	\begin{lem}
		\label{prop:isolation_eigenvalues}
		There exists $r_0 > 0$ and a closed simple curve $\Gamma$ such that for $|\xi| \leq\,r_0$, the part $\Sigma(\lin_\xi) \cap \Delta_{-\ah}$ is made up of a finite amount of eigenvalues, and these are enclosed by $\Gamma$ which does not meet $\Sigma(\lin_\xi)$.
	\end{lem}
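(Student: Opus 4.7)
The plan is to exhibit a compact set $K \subset \overline{\Delta_{-\ah}}$ that contains $\Sigma(\lin_\xi) \cap \overline{\Delta_{-\ah}}$ uniformly in $\xi \in \R^d$, and then combine the spectral gap of $\lin = \lin_0$ from Theorem~\ref{thm:hilbert_decomposition} with the continuity of the resolvent (Proposition~\ref{prop:analicity_resolvent}) to prevent any spectral point of $\lin_\xi$ from escaping a small neighborhood of $0$ when $|\xi|$ is small. By Proposition~\ref{prop:enlargement} the spectrum in $\overline{\Delta_{-\ah}}$ is the same in $\spcg$ and $\spcp$, so it is enough to work in either space.

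For the uniform localization, I would first bound $\real \lambda$ on the spectrum: using the decomposition $\lin_\xi = \dis_\xi + \reg$ of Lemma~\ref{prop:def_lin_xi}, with $\dis_\xi + \ah_1$ m-dissipative and $\reg \in \BBB(\spc)$ (via the continuous embedding $\spcg \hookrightarrow \spcp$), a standard Neumann series argument yields $\Sigma(\lin_\xi) \subset \{\real \lambda \leq -\ah_1 + \|\reg\|_{\BBB(\spc)}\}$, uniformly in $\xi$. For $|\ima \lambda|$, I would revisit the argument from the proof of Proposition~\ref{prop:enlargement} while keeping track of uniformity: because \eqref{eqn:est_k_nu} is a \emph{joint} limit as $|\xi| + |\tau| \to \infty$, one can choose a threshold $T > 0$ such that $\left\|K \RR_{-(\nu + iv\cdot\xi)}(\sigma + i\tau)\right\|_{\BBB(\spcg)} < 1/2$ whenever $|\tau| \geq T$ and $\sigma \geq -\ah$, regardless of $\xi$, and the factorization \eqref{eqn:fact_ling_xi_lambda} then places $\sigma + i\tau$ in $\Rho(\ling_\xi)$. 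Setting $K := \overline{\Delta_{-\ah}} \cap \{|\ima \lambda| \leq T,\ \real \lambda \leq -\ah_1 + \|\reg\|_{\BBB(\spc)}\}$ gives the desired compact.

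Next, I would fix any $r \in (0, \ah)$ and let $\Gamma$ be the circle of radius $r$ centered at $0$. Since $r < \ah \leq \ah_0$, the spectral gap of Theorem~\ref{thm:hilbert_decomposition} yields $\overline{D(0, r)} \cap \Sigma(\lin) = \{0\}$, so $K \setminus D(0, r)$ is a compact subset of $\Rho(\lin)$. Proposition~\ref{prop:analicity_resolvent} asserts that $\{(\lambda, \xi) \in \Comp \times \R^d : \lambda \in \Rho(\lin_\xi)\}$ is open, and a finite subcover argument then produces $r_0 > 0$ with $K \setminus D(0, r) \subset \Rho(\lin_\xi)$ for every $|\xi| \leq r_0$. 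For such $\xi$, the uniform localization of Step~1 forces $\Sigma(\lin_\xi) \cap \Delta_{-\ah} \subset D(0, r)$, which is finite and discrete by Proposition~\ref{prop:enlargement}, while $\Gamma$ itself avoids the spectrum. The main obstacle in this scheme is precisely the uniformity in $\xi$ in Step~1: the per-$\xi$ bound implicit in the proof of Proposition~\ref{prop:enlargement} would, a priori, allow spectral points of $\lin_\xi$ to escape along the imaginary axis as $\xi \to 0$, and it is the joint character of the limit in \eqref{eqn:est_k_nu} that rules this out.
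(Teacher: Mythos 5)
Your proof is correct, but it takes a genuinely different route from the paper's. The paper's argument is quantitative and stays entirely inside the Neumann‐series machinery: working in $\spcg$ it factorizes $\RR = \RR_{-\nu} + \RR_{-\nu} K \RR$, uses the uniform bound $\|v\RR_{-\nu}(z)\|_{\BBB(\spcg)} \le (\nu_0-\ah)^{-1}$ on $\overline{\Delta_{-\ah}}$ together with the fact that $0$ is a simple pole of $\RR$ to obtain $\|v\RR(\lambda)\|_{\BBB(\spcg)} \le A + B/|\lambda|$, and then reads off from the convergence condition of expansion \eqref{eqn:second_neumann_exp} that $\Sigma(\lin_\xi) \cap \Delta_{-\ah} \subset \{|\lambda| \le M|\xi|\}$, from which the path $\Gamma$ is built. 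Your argument is instead a soft compactness argument: you first confine $\Sigma(\lin_\xi) \cap \overline{\Delta_{-\ah}}$ to a fixed compact $K$ uniformly in $\xi$ (the real‐part bound by m-dissipativity plus relative boundedness of $\reg$, the imaginary‐part bound by the uniform-in-$\xi$ character of the joint limit \eqref{eqn:est_k_nu}), then use the openness of $\{(\lambda,\xi): \lambda \in \Rho(\lin_\xi)\}$ from Proposition~\ref{prop:analicity_resolvent} and a finite subcover of $K \setminus D(0,r)$ to push the spectrum inside $D(0,r)$ for $|\xi| \le r_0$. Both reasonings are sound; yours is in fact closer in spirit to the paper's proof of Lemma~\ref{prop:spectral_gap_large_xi} (the large-$\xi$ spectral gap), where the same compactness-plus-continuity scheme is deployed. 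What the paper's route buys is a constructive rate $O(|\xi|)$ for the localization of the eigenvalue group, consistent with the paper's stated emphasis on constructive estimates; what yours buys is robustness (no need for an explicit resolvent bound near $0$) at the cost of losing that explicit rate. Since the $M|\xi|$ localization is not actually invoked elsewhere in the paper, your weaker conclusion suffices to establish the lemma and to support the use of $\Gamma$ in Lemma~\ref{prop:proj}.
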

	
	\begin{proof}
		According to Proposition \ref{prop:enlargement}, we can work with $\spc = \spcg$ as $\Sigma(\lin_\xi) \cap \Delta_{-\ah}$ does not depend on the choice of space $\spc$. Recall from Proposition~\ref{prop:analicity_resolvent} that if $\lambda \in \Rho(\lin)$, for any $\xi \in \R^d$ such that $|\xi| \|v \RR(\lambda)\|_{\BBB(\spc)} < 1$, we have $\lambda \in \Rho(\lin_\xi)$.
		
		\step{1}{A control for $v \RR(\lambda)$}
		When $\spc=\spcg$, the resolvent can also be factored on~$\overline{\Delta_{-\ah}} - \{0\}$ as
		\begin{equation}
		\label{eqn:res_gauss_fact}
		\RR = \RR_{-\nu} + \RR_{-\nu} K \RR.
		\end{equation}
		Noting that for any $z \in \overline{\Delta_{-\ah}}$, we have the uniform bound $$\|v \RR_{-\nu}(z)\|_{\BBB(\spcg)} = \sup_{v \in \R^d} \frac{|v|}{|\nu + z|} \leq \frac{1}{\nu_0 - \ah},$$
		and that $\Sigma(\lin) \cap \Delta_{-\ah} = \{0\}$, where $0$ is a semi-simple eigenvalue, and thus a simple pole of the resolvent $\RR$, the following control holds for some $A, B > 0$:
		$$\|v \RR(\lambda)\|_{\BBB(\spcg)} \leq A + \frac{B}{|\lambda|}, ~ \lambda \in \Delta_{-\ah} - \{0\}.$$
		
		\step{2}{Isolation of the eigenvalues}
		By the observation made at the beginning of the proof, if $\lambda \in \Delta_{-\ah} - \{0\}$ and $\xi$ are such that $|\xi|\left(A + B/|\lambda|\right) < 1$, we then have~$\lambda \in \Rho(\lin_\xi)$. In other words, if $\lambda \in \Sigma(\lin_\xi)$, then $|\lambda| \leq B |\xi| (1 - A|\xi|)^{-1}$, therefore, for some $M > 0$ and small enough $r > 0$,
		\begin{equation*}
		\Sigma(\lin_{\xi}) \cap \Delta_{-\ah} \subset \{|\lambda| \leq M |\xi|\}, ~ |\xi| \leq r.
		\end{equation*}
		Choosing some $r_0>0$ small enough, we can consider a closed path $\Gamma$ circling the eigenvalues in $\Sigma(\lin_{\xi}) \cap \Delta_{-\ah}$ for $|\xi| \leq r_0$ while staying in $\Delta_{-\ah}$ (see Figure \ref{fig:isolation_lambda_group}).
	\end{proof}
	
	\begin{figure}
		\includegraphics[width=450pt]{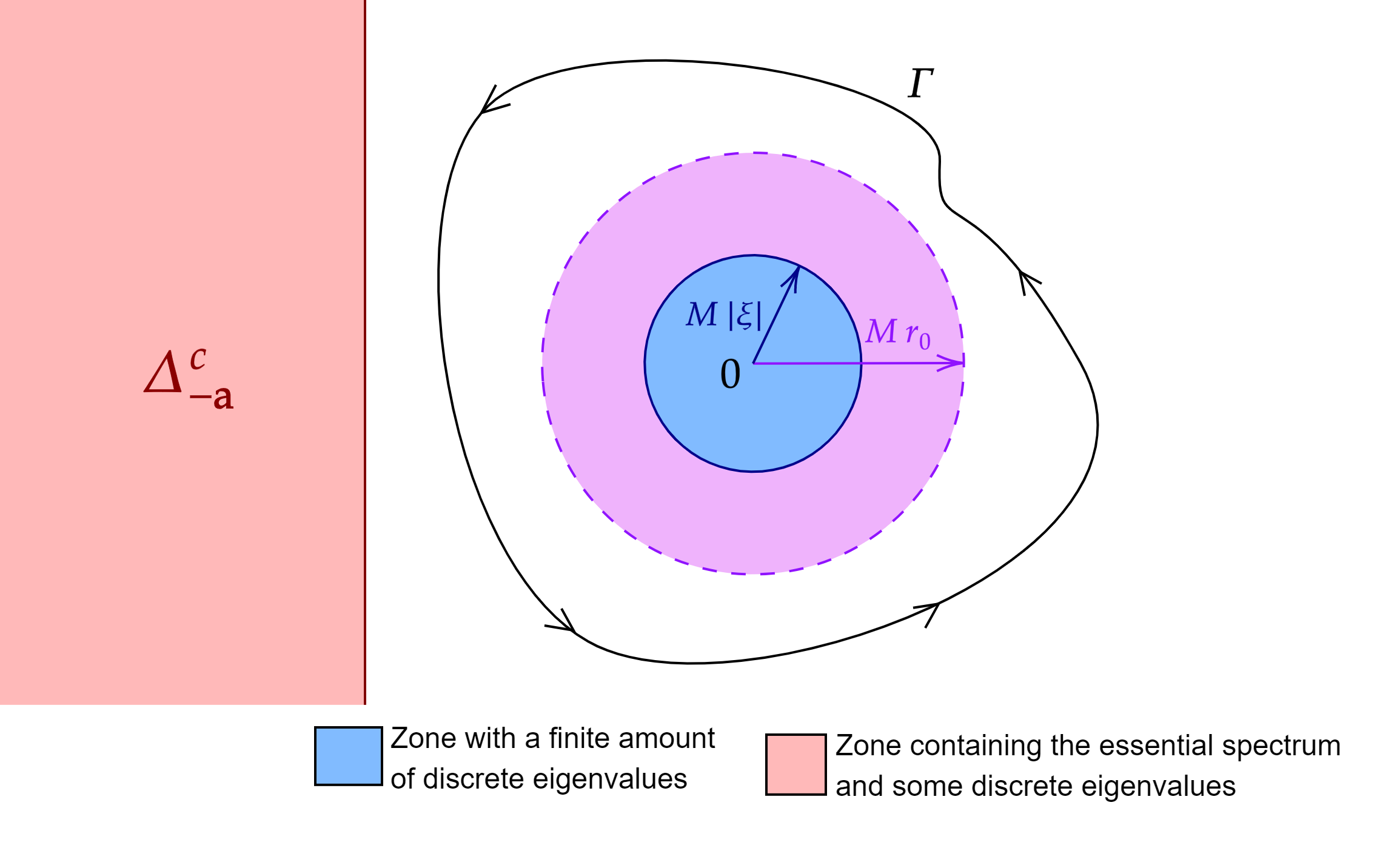}
		\caption{For $|\xi| \leq r_0$, the eigenvalues in $\Delta_{-\ah}$ are confined within a disk of radius $M |\xi|$ and centered at $0$. If $M r_0 < \ah$, we can choose a path enclosing the eigenvalues.}
		\label{fig:isolation_lambda_group}
	\end{figure}
	
	\begin{lem}
		\label{prop:spectral_gap_large_xi}
		For any $r_* > 0$, there exists $\beh_* \in (0, \nu_0)$ such that $$\Sigma(\lin_\xi) \cap \Delta_{-\beh^*} = \emptyset, ~ |\xi| \geq r_*.$$
	\end{lem}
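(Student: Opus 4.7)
The plan is to work in the Gaussian space $\spcg$, since by Proposition \ref{prop:enlargement} the spectrum of $\lin_\xi$ on $\overline{\Delta_{-\ah}}$ coincides in $\spcg$ and $\spcp$, so any spectral gap obtained in $\spcg$ transfers to $\spcp$. I would split the range $|\xi| \geq r_*$ into a large-frequency regime $|\xi| \geq R$ (for some constructive $R > 0$) and a compact intermediate regime $r_* \leq |\xi| \leq R$, and treat them by different methods.

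For the large-frequency regime, I would combine the Ukai--Yang estimate \eqref{eqn:est_k_nu} with the factorization \eqref{eqn:fact_ling_xi_lambda}: fixing $\delta \in (0, \nu_0)$, there exists $R > 0$ such that $\| K \RR_{-(\nu + iv\cdot\xi)}(\sigma + i\tau) \|_{\BBB(\spcg)} < 1/2$ uniformly in $\sigma \geq -\nu_0 + \delta$ and $\tau \in \R$ whenever $|\xi| \geq R$. The resulting Neumann series gives $\overline{\Delta_{-\nu_0 + \delta}} \subset \Rho(\ling_\xi)$, i.e.\ a constructive spectral gap of size $\nu_0 - \delta$ for such $|\xi|$.

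For the compact intermediate regime $r_* \leq |\xi| \leq R$, I would argue by contradiction, combining compactness with the continuity of the resolvent (Proposition \ref{prop:analicity_resolvent}). Assuming sequences $\xi_n \in \{r_* \leq |\xi| \leq R\}$ and $\lambda_n \in \Sigma(\ling_{\xi_n})$ with $\real \lambda_n \to 0$, estimate \eqref{eqn:est_k_nu} applied with $|\xi_n|$ bounded forces $|\ima \lambda_n| \leq T$ for some uniform $T > 0$. After extraction, $\xi_n \to \xi_*$ with $\xi_* \neq 0$ and $\lambda_n \to \lambda_* \in i\R$, using that $\Sigma(\ling_\xi) \subset \{\real z \leq 0\}$ by the non-positivity of $\ling$ combined with the skew-symmetry of $iv\cdot\xi$. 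The crux is then to show $\lambda_* \in \Rho(\ling_{\xi_*})$: by \cite[Theorem IV-5.35]{Kato} applied to $\ling_{\xi_*} = -(\nu + iv\cdot\xi_*) + K$, every spectral value in $\overline{\Delta_{-\ah}}$ is a discrete eigenvalue, so it suffices to rule out $\lambda_* = i\omega$ as an eigenvalue. If $\ling_{\xi_*} f = i\omega f$, taking the real part of the $\spcg$-inner product with $f$ yields $\langle \ling f, f\rangle_\spcg = 0$, hence $f \in \NN$, and then $(v \cdot \xi_* + \omega) f = 0$ a.e. Since $\xi_* \neq 0$ the hyperplane $\{v \cdot \xi_* = -\omega\}$ has Lebesgue measure zero, while any $f \in \NN$ is a polynomial times $M$, forcing $f \equiv 0$. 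Continuity of the resolvent then produces a neighborhood of $(\lambda_*, \xi_*)$ inside the resolvent set, contradicting $\lambda_n \in \Sigma(\ling_{\xi_n})$. Choosing $\beh_*$ to be the minimum of $\nu_0 - \delta$ and the (non-constructive) gap obtained in the intermediate regime closes the proof.

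The main obstacle is the intermediate regime: \eqref{eqn:est_k_nu} is only qualitative in $|\xi|$, so the resulting bound on $\beh_*$ is inherently non-constructive---this is exactly the limitation acknowledged in the discussion of \cite{YY} in the introduction. The algebraic observation that $(v \cdot \xi_* + \omega) f = 0$ forces $f = 0$ for $f \in \NN$ and $\xi_* \neq 0$ is the decisive step that makes the compactness argument close.
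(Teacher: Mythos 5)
Your proposal is correct and follows essentially the same route as the paper's proof: split into a Ukai--Yang large-frequency regime and a compact intermediate regime, use Proposition \ref{prop:analicity_resolvent} for continuity/closedness, and close the compactness argument by ruling out purely imaginary eigenvalues via dissipativity and the algebraic observation that $(v\cdot\xi_* + \omega)f = 0$ with $f \in \NN$ and $\xi_* \neq 0$ forces $f = 0$. The only cosmetic difference is that you phrase the intermediate regime via sequential compactness and extraction, whereas the paper argues directly that the compact set $X = \{(\lambda,\xi) : \lambda \in \Sigma(\lin_\xi) \cap \overline{\Delta_{-\ah}},\ r_* \leq |\xi| \leq R_*\}$ is disjoint from $\{\real z \geq 0\} \times \{r_* \leq |\xi| \leq R_*\}$ and takes $\beh_*$ to be the positive distance between them.
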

	
	\begin{proof}
		Once again, we can consider the case $\spc = \spcg$. By \eqref{eqn:est_k_nu}, for some large enough~$R_* > 0$, we have that $\Sigma(\lin_\xi) \cap \overline{\Delta_{-\ah}} = \emptyset$ whenever $|\xi| \geq R_*$. Consider now~$r_* > 0$ and the sets
		\begin{gather*}
		X:= \left\{(\lambda, \xi) \in \Comp \times \R^d ~| ~  \lambda \in \Sigma(\lin_\xi), ~ \lambda \in \overline{\Delta_{-\ah}}, ~ r_* \leq |\xi| \leq R_* \right\}, \\
		Y := \{z ~:~\real z \geq 0\} \times \{\xi ~: ~ r_* \leq |\xi| \leq R_*\}.
		\end{gather*}
		If we can show that $X$ is compact and does not meet $Y$, then we shall have the conclusion with $\beh_*:= \textnormal{dist}\left(X, Y\right) > 0$.
		
		\step{1}{Compactness of $X$}
		This set is closed by Proposition \ref{prop:analicity_resolvent}. 
		
		Arguing as in the proof of Lemma \ref{prop:enlargement} (but bounding $\xi$ by $R_*$ instead of fixing it when using \eqref{eqn:est_k_nu}), we have for some $T > 0$ that $\overline{\Delta_{-\ah}} \cap \{|\ima z| \geq T \}$ is included in~$\Rho(\ling_\xi)$ for any $|\xi| \leq R_*$. Thus $X$ is compact because it is closed and contained in~$[-\ah, 0] + i [-T, T] \times \{r_* \leq |\xi| \leq R_*\}$.
		
		\step{2}{$X$ does not meet $Y$}
		We know that $X$ is made up of pairs $(\lambda, \xi)$ such that~$\lambda \in \spdis\left(\ling_\xi\right)$ by Proposition \ref{prop:enlargement}, let us now show that for any $(\lambda, \xi) \in X$, we have $\real \lambda < 0$. As $\lin_\xi$ is dissipative, it is enough to show that it has no eigenvalue in $i \R$ for $\xi \neq 0$. Let us argue by contradiction and consider an eigenvalue $i \tau \in i \R$ and an associated (non-zero) eigenfunction decomposed $f=f_0 + f_1 \in \NN \oplus \NN^\bot$.
		
		\medskip
		\noindent
		Suppose $f_1 \neq 0$. As $\real \langle \lin_\xi f, f \rangle\leq -\ah_0 \|f_1\|^2_\spcg < 0$ by the coercivity of~$\lin$ on~$\NN^\bot$, and~$\real \langle \lin_\xi f, f \rangle= \real \left(i \tau \|f\|_\spcg^2\right) = 0$ because $f$ is an eigenfunction, we get a contradiction, therefore~$f \in \NN$. But this would mean that $\lin_\xi f = -i v \cdot \xi f = i \tau f$, which is impossible as~$f \neq 0$ and $\xi \neq 0$.
	\end{proof}
	
	\section{The eigen problem for small \texorpdfstring{$\xi$}{ξ}}
	\setcounter{equation}{0}
	\setcounter{prop}{0}
	\label{scn:eigen_problem}
	
	We show in this section that for small $\xi$, the eigenvalue $0$ of the unperturbed operator $\lin$ splits into several semi-simple eigenvalues $\lambda_{-1}(\xi), \dots, \lambda_2(\xi)$ of the perturbed operator $\lin_\xi$. We also show that these eigenvalues, corresponding spectral projectors and eigenfunctions have Taylor expansions in $|\xi|$ near $|\xi|=0$.
	
	We rely mostly on perturbation theory and draw inspiration from Kato's \textit{reduction process} \cite[Section II-2.3]{Kato}: considering $\PP(\xi) = \sum \Pi_{\lin_\xi, \lambda}$, where the sum is taken over all eigenvalues $\lambda$ in~$\Sigma(\lin_\xi) \cap \Delta_{-\ah}$, and $\UU(\xi)$ an isomorphism of $\spc$ mapping $\NN$ onto~$\ran(\PP(\xi))$, we have
	\begin{equation*}
	\Sigma(\lin_\xi) \cap \Delta_{-\ah} = \Sigma \left(\left(\lin_\xi\right)_{|\ran\left(\PP(\xi)\right)}\right) = \Sigma\left(\UU(\xi)^{-1} \lin_\xi \,  \UU(\xi)_{| \NN} \right).
	\end{equation*}
	The eigenproblem is thus reduced to the one involving the operator $\UU(\xi)^{-1}\lin_\xi \, \UU(\xi)$ on the finite dimensional space $\NN$.
	
	\medskip
	In the following, we present Taylor expansions of $\PP(\xi)$ (Lemma \ref{prop:proj}) and $\UU(\xi)$ (Lemma~\ref{prop:isomorphisms}). We then define the auxiliary operator $$\widetilde{\lin}(\xi) := \frac{1}{|\xi|} \UU^{-1}(\xi)\lin_\xi \, \UU(\xi)_{| \NN} \in \BBB(\NN),$$ show we can assume $\xi=(r, 0, \dots, 0)$, and give a Taylor approximation (Lemmas \ref{prop:auxiliary_operator} and \ref{prop:shape_auxiliary}) so that we may use Kato's theory to solve our eigenproblem.
	
	\begin{lem}
		\label{prop:proj}
		There exists $r_0 > 0$ such that, for $|\xi| \leq r_0$, the projector
		\begin{equation}
		\label{eqn:spectral_projectors_split}
		\PP(\xi):= \sum_{\lambda \in D} \Pi_{\lin_{\xi}, \lambda},
		\end{equation}
		where $D := \Sigma(\lin_\xi) \cap \Delta_{-\ah}$, expands in $\BBB(\spc)$:
		\begin{gather*}
		\PP(\xi) = \PP(0) + |\xi| \PP^{(1)} \left(\widetilde{\xi}\right)  + \PP^{(2)}(\xi), ~ \widetilde{\xi} := \xi/|\xi|,\\
		\PP^{(1)}\left(\widetilde{\xi}\right) = i \PP(0) v \cdot \widetilde{\xi} \SS + i \SS v\cdot \widetilde{\xi} \PP(0) \in \BBB(\spc),\\
		\PP(0) = \Pi_{\lin, 0}, ~ \left\|\PP^{(2)}(\xi)\right\|_{\BBB(\spc)} = o(|\xi|),
		\end{gather*}
		where $\SS$ is the reduced resolvent of $\lin$ at $\lambda = 0$ (see \eqref{eqn:def_reduced_resolvent} for the definition). Furthermore,~$\PP(\xi)_{| \spcg}$ and $\ran\left(\PP(\xi)\right)$ do not depend on the choice of space $\spc=\spcg$ or~$\spc=\spcp$.
	\end{lem}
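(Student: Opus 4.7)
The approach is based on the contour-integral representation of spectral projectors. The plan is to write, for $|\xi| \leq r_0$,
\begin{equation*}
\PP(\xi) = \frac{1}{2\pi i} \int_\Gamma \RR(\lambda,\xi)\, d\lambda,
\end{equation*}
where $\Gamma$ is the closed path produced by Lemma \ref{prop:isolation_eigenvalues}. By that lemma $\Gamma \subset \Rho(\lin_\xi)$ and encloses exactly the finitely many eigenvalues in $\Sigma(\lin_\xi) \cap \Delta_{-\ah}$, so Dunford's functional calculus identifies this integral with the sum \eqref{eqn:spectral_projectors_split}. Specializing to $\xi = 0$, only the eigenvalue $0$ of $\lin$ lies inside $\Gamma$ (by Theorem \ref{thm:hilbert_decomposition}), which immediately gives $\PP(0) = \Pi_{\lin,0}$. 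The fact that $\PP(\xi)_{|\spcg}$ and $\ran(\PP(\xi))$ are independent of the choice of space is then a direct consequence of Proposition \ref{prop:enlargement} applied eigenvalue by eigenvalue.

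Next, I expand the resolvent around $\xi = 0$ using Proposition \ref{prop:analicity_resolvent} with $(\lambda_0,\xi_0) = (\lambda,0)$, yielding the Neumann series
\begin{equation*}
\RR(\lambda,\xi) = \sum_{n=0}^\infty (-i)^n\, \RR(\lambda)\bigl[v \cdot \xi\, \RR(\lambda)\bigr]^n.
\end{equation*}
This converges uniformly in $\lambda \in \Gamma$ as soon as $|\xi| M_\Gamma < 1$, where $M_\Gamma := \sup_{\lambda \in \Gamma} \|v \RR(\lambda)\|_{\BBB(\spc)}$ is finite: the bound \eqref{eqn:perturbation_resolvent_bound} together with the continuity of the resolvent (Proposition \ref{prop:analicity_resolvent}) gives this uniformly on the compact path $\Gamma$. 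Shrinking $r_0$ if necessary so that $r_0 M_\Gamma < 1$, I may integrate term by term in $\BBB(\spc)$.

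The $n=0$ term reproduces $\PP(0) = \Pi_{\lin,0}$. The $n=1$ contribution is $-i\cdot \frac{1}{2\pi i}\int_\Gamma \RR(\lambda)(v\cdot\xi)\RR(\lambda)\, d\lambda$, which I evaluate by residues: since $0$ is semi-simple, the Laurent expansion $\RR(\lambda) = \Pi_{\lin,0}/\lambda - \SS + O(\lambda)$ near $0$ (in the convention of \eqref{eqn:def_reduced_resolvent}) gives, after multiplying out,
\begin{equation*}
\RR(\lambda)(v\cdot\xi)\RR(\lambda) = \frac{\Pi_{\lin,0}(v\cdot\xi)\Pi_{\lin,0}}{\lambda^2} - \frac{\Pi_{\lin,0}(v\cdot\xi)\SS + \SS(v\cdot\xi)\Pi_{\lin,0}}{\lambda} + O(1),
\end{equation*}
whose residue is $-\bigl[\Pi_{\lin,0}(v\cdot\xi)\SS + \SS(v\cdot\xi)\Pi_{\lin,0}\bigr]$. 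Multiplying by $-i$ and factoring out $|\xi|$ produces exactly the claimed $\PP^{(1)}(\widetilde{\xi})$. For the remainder $\PP^{(2)}(\xi) = \sum_{n\ge 2}\frac{(-i)^n}{2\pi i}\int_\Gamma \RR(\lambda)[v\cdot\xi\,\RR(\lambda)]^n d\lambda$, the uniform majoration $\|v\cdot\xi\,\RR(\lambda)\|_{\BBB(\spc)} \leq |\xi| M_\Gamma$ on $\Gamma$ combined with a geometric sum yields $\|\PP^{(2)}(\xi)\|_{\BBB(\spc)} = O(|\xi|^2) = o(|\xi|)$.

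The main technical point is the uniform control of $\|v\RR(\lambda)\|_{\BBB(\spc)}$ along $\Gamma$, which is required both to make the Neumann expansion converge and to integrate term by term in the operator norm; this is precisely what Proposition \ref{prop:analicity_resolvent} provides, with an estimate uniform in the Gaussian and polynomial spaces alike. The residue computation and the identification of the first-order term with the stated formula in terms of the reduced resolvent $\SS$ is then routine once the sign convention is fixed.
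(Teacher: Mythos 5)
Your proof is correct and follows essentially the same approach as the paper: the Dunford contour integral over $\Gamma$, the Neumann expansion of $\RR(\lambda,\xi)$ in $\xi$ using the uniform bound on $v\RR(\lambda)$, term-by-term integration, and identification of the coefficients via the Laurent expansion \eqref{eqn:resolvent_expansion_eigenvalue} at the semi-simple eigenvalue $0$, with the space-independence handled by Proposition~\ref{prop:enlargement}. Your explicit residue computation and the $O(|\xi|^2)$ remainder bound are correct and slightly more detailed than the paper's.
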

	
	\begin{proof}
		Consider $r_0 > 0$ and $\Gamma$ from Lemma~\ref{prop:isolation_eigenvalues}. As $\Gamma$ does not meet $\Sigma(\lin_\xi)$ and encloses the eigenvalues in $\Sigma(\lin_\xi) \cap \Delta_{-\ah}$, the projector~\eqref{eqn:spectral_projectors_split} writes for any $|\xi| \leq r_0$
		\begin{equation*}
		\PP(\xi) = \frac{1}{2 i \pi} \int_\Gamma \RR(\lambda, \xi) d\lambda.
		\end{equation*}
		By the estimate \eqref{eqn:perturbation_resolvent_bound}, $\sup_{\lambda \in \Gamma} \left\|v \RR(\lambda)\right\|_{\BBB(\spc)} < \infty$, thus \eqref{eqn:second_neumann_exp} converges absolutely for~$|\xi| \leq~r_0$ small enough:
		\begin{equation*}
		\RR(\lambda, \xi) = \RR(\lambda) \sum_{n=0}^\infty |\xi|^n \left(-i v \cdot \widetilde{\xi} \RR(\lambda)\right)^n.
		\end{equation*}
		By integrating this series along $\lambda \in \Gamma$, we get the expansion of $\PP(\xi)$. The expression of the coefficients comes from the residue Theorem and the fact that $\lambda=0$ is a semi-simple eigenvalue of $\RR(\lambda)$, combined with the expansion \eqref{eqn:resolvent_expansion_eigenvalue}.
		The last point of the lemma comes from Proposition~\ref{prop:enlargement}.
	\end{proof}
	
	\begin{lem}
		\label{prop:isomorphisms}
		There exists $r_0 > 0$ and a family of invertible maps $\UU(\xi) \in \BBB(\spc)$ for any $|\xi| \leq r_0$ such that $\UU(\xi)$ maps $\NN = \ran\left(\PP(0)\right)$ onto $\ran\left(\PP(\xi)\right)$, and $\UU(\xi)_{| \spcg}$ does not depend on the choice $\spc=\spcg$ or $\spc=\spcp$.
		\medskip
		\noindent
		Furthermore, they follow the expansion
		\begin{equation}
		\label{eqn:isomorphism_expansion}
		\UU(\xi) = \Id + |\xi| \UU^{(1)}\left(\widetilde{\xi}\right) + \UU^{(2)}(\xi),
		\end{equation}
		with $\left\|\UU^{(2)}(\xi)\right\|_{\BBB(\spc)} = o(|\xi|)$ and~$\UU^{(1)}\left(\widetilde{\xi}\right)=i \PP(0) v \cdot \widetilde{\xi} \SS - i \SS v \cdot\widetilde{\xi} \PP(0) \in \BBB(\spc)$, where~$\SS$ is the reduced resolvent of $\lin$ at $\lambda = 0$ (see \eqref{eqn:def_reduced_resolvent} for the definition).
	\end{lem}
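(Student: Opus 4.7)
The natural tool is Kato's transformation function (Kato, II-\S4.2). I would set
\[
W(\xi) := \PP(\xi)\PP(0) + (\Id - \PP(\xi))(\Id - \PP(0)), \qquad R(\xi) := (\PP(\xi) - \PP(0))^2,
\]
and
\[
\UU(\xi) := [\Id - R(\xi)]^{-1/2}\, W(\xi).
\]
By Lemma~\ref{prop:proj}, $\PP(\xi) - \PP(0) = O(|\xi|)$ in $\BBB(\spc)$, so $R(\xi) = O(|\xi|^2)$; shrinking $r_0$ if needed, $[\Id-R(\xi)]^{-1/2}$ is defined through the holomorphic functional calculus around $z = 1$ (Cauchy integral), and satisfies $[\Id-R(\xi)]^{-1/2} = \Id + O(|\xi|^2)$. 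Consequently $\UU(\xi)$ is a small perturbation of the identity, hence invertible. Independence of $\UU(\xi)_{|\spcg}$ from the choice of ambient space is inherited from $\PP(\xi)_{|\spcg}$ (Proposition~\ref{prop:enlargement}), since $W$ and $R$ are polynomial in $\PP(\xi), \PP(0)$ and the Cauchy integral defining the square root commutes with restriction to~$\spcg$.

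\textbf{Intertwining property.} Direct algebra gives $W(\xi)\PP(0) = \PP(\xi)W(\xi) = \PP(\xi)\PP(0)$ and $R(\xi)\PP(0) = \PP(0)R(\xi)$ (similarly with $\PP(\xi)$); by functional calculus $[\Id-R(\xi)]^{-1/2}$ also commutes with $\PP(0)$ and $\PP(\xi)$. Combining these, $\UU(\xi)\PP(0) = \PP(\xi)\UU(\xi)$, which together with the invertibility of $\UU(\xi)$ forces $\UU(\xi)$ to realize a bijection $\NN \to \ran(\PP(\xi))$.

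\textbf{First-order expansion.} Substituting the expansion of $\PP(\xi)$ from Lemma~\ref{prop:proj} into $W(\xi)$ and keeping only terms of order $\leq 1$ yields
\[
W(\xi) = \Id + |\xi|\bigl(2\PP^{(1)}(\widetilde\xi)\PP(0) - \PP^{(1)}(\widetilde\xi)\bigr) + o(|\xi|),
\]
while $[\Id-R(\xi)]^{-1/2}$ contributes only at order $O(|\xi|^2)$. Using the reduced-resolvent identities $\SS\PP(0) = \PP(0)\SS = 0$ one computes $\PP^{(1)}\PP(0) = i\SS(v\cdot\widetilde\xi)\PP(0)$ and $\PP(0)\PP^{(1)} = i\PP(0)(v\cdot\widetilde\xi)\SS$, so that the first-order coefficient of $\UU(\xi)$ reduces to the expression for $\UU^{(1)}(\widetilde\xi)$ in the statement (up to the sign convention for $\SS$ used in \eqref{eqn:def_reduced_resolvent} and to the choice among the two symmetric variants of Kato's formula). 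The remainder $\UU^{(2)}(\xi) = \UU(\xi) - \Id - |\xi|\UU^{(1)}(\widetilde\xi)$ is then $o(|\xi|)$ by collecting the higher-order terms.

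\textbf{Expected difficulty.} The construction is purely algebraic once Lemma~\ref{prop:proj} is available; no analytic estimate is needed beyond the $O(|\xi|)$ control of $\PP(\xi) - \PP(0)$. The only care required is the bookkeeping between the two Kato variants, with $\PP(\xi)$ on the left or on the right of $\PP(0)$ in $W$, whose transformation functions are inverses of each other and differ in the sign of the first-order coefficient; one must select the variant matching the direction $\NN \to \ran(\PP(\xi))$ announced in the lemma, and align it with the sign chosen for $\SS$.
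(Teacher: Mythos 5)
Your construction is the same as the paper's: Kato's pair-of-projections formula $\UU(\xi) = (\Id-R(\xi))^{-1/2}W(\xi)$ with $W(\xi) = \PP(\xi)\PP(0) + (\Id-\PP(\xi))(\Id-\PP(0))$ and $R(\xi) = (\PP(\xi)-\PP(0))^2$, with the first-order expansion read off from Lemma~\ref{prop:proj} using $\SS\PP(0)=\PP(0)\SS=0$, and the independence of $\UU(\xi)_{|\spcg}$ from the ambient space inherited from Proposition~\ref{prop:enlargement}. Your caution about the sign is in fact well founded: substituting $\PP^{(1)} = i\,\PP(0)\,v\cdot\widetilde{\xi}\,\SS + i\,\SS\, v\cdot\widetilde{\xi}\,\PP(0)$ into $2\PP^{(1)}\PP(0)-\PP^{(1)}$ gives $i\,\SS\, v\cdot\widetilde{\xi}\,\PP(0) - i\,\PP(0)\, v\cdot\widetilde{\xi}\,\SS$, which is the negative of the coefficient displayed in the lemma's statement, so the stated sign of $\UU^{(1)}$ appears to be a typographical slip in the paper rather than a flaw in your argument.
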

	
	\begin{proof}
		Kato's process \cite[Section I-4.6]{Kato} shows that whenever two bounded projectors~$P$ and $Q$ are such that $\|P - Q\|_{\BBB(\spc)} < 1$, we can define an invertible map $U$ satisfying the relation $U P U^{-1} = Q$ by
		\begin{gather*}
		U = U' (1-R)^{-1/2} = (1 - R)^{-1/2} U'
		\end{gather*}
		where we have noted
		\begin{gather*}
		R = (P-Q)^{2}, \\
		(1-R)^{-1/2} = \sum_{n=0}^\infty \binom{-1/2}{n}(-R)^n,\\
		U' = QP + (1-Q)(1-P).
		\end{gather*}
		By assuming $r_0 > 0$ to be small enough so that $\|\PP(\xi) - \PP(0)\|_{\BBB(\spc)} < 1$ whenever~$|\xi| \leq~r_0$, we define this way $\UU(\xi)=U$ with $P=\PP(0)$ and $Q = \PP(\xi)$. The existence of the expansion comes from the expansion of $\PP(\xi)$, and the coefficients can be computed from the latter, using the fact that $\PP(0) \SS = \SS \PP(0) = 0$.
		
		\medskip
		\noindent
		The fact that $\UU(\xi)_{| \spcg}$ does not depend on the choice of $\spc$ comes from the last point of Lemma \ref{prop:proj}.
	\end{proof}
	
	\newcommand\canv{\mathbf{e}}
	
	\begin{lem}
		\label{prop:auxiliary_operator}
		The reduced operator defined by
		\begin{equation*}
		\widetilde{\lin}(\xi):= \frac{1}{|\xi|}\UU(\xi)^{-1} \lin_{\xi} \, \UU(\xi) _{| \NN} \in \BB(\NN)
		\end{equation*}
		does not depend on the initial choice of space~$\spc=\spcg, \spcp$, and has a first order Taylor expansion
		\begin{equation}
		\label{eqn:lin_reduced_expansion}
		\widetilde{\lin}(\xi) = -i \PP(0) \widetilde{\xi} \cdot v + |\xi| \PP(0) \widetilde{\xi} \cdot v \SS \widetilde{\xi} \cdot v + o(|\xi|).
		\end{equation}
		
		\medskip
		\noindent
		Furthermore, for any $|\xi| \leq r_0$, its spectrum is related to the one of $\lin_\xi$ by
		\begin{equation}
		\label{eqn:spectra_lin_lin_reduced}
		\Sigma(\lin_\xi) \cap \Delta_{-\ah} = \Sigma\left(|\xi| \widetilde{\lin}(\xi)\right).
		\end{equation}
	\end{lem}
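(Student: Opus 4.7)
The lemma contains three assertions, which I would address in order.

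\textbf{Space-independence.} For $f \in \NN \subset \spcg$, Lemmas \ref{prop:proj} and \ref{prop:isomorphisms} both guarantee that $\PP(\xi)|_{\spcg}$ and $\UU(\xi)|_{\spcg}$ are intrinsic, so $\UU(\xi) f$ lies in the finite-dimensional subspace $\ran(\PP(\xi)) \subset \spcg$ as a vector that does not depend on the ambient choice. Restricted to this subspace, $\lin_\xi$ acts identically whether one views it in $\spcg$ or $\spcp$, and $\UU(\xi)^{-1}$ maps $\ran(\PP(\xi))$ back to $\NN$ unambiguously; the composition $\widetilde{\lin}(\xi)$ is therefore well defined independently of the choice of ambient space.

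\textbf{Spectral identity.} By definition \eqref{eqn:spectral_projectors_split}, $\PP(\xi)$ is the total projector of $\lin_\xi$ attached to the eigenvalues in $\Sigma(\lin_\xi) \cap \Delta_{-\ah}$, which are isolated by the curve $\Gamma$ of Lemma \ref{prop:isolation_eigenvalues}. Standard spectral theory then gives $\Sigma(\lin_\xi) \cap \Delta_{-\ah} = \Sigma(\lin_\xi|_{\ran(\PP(\xi))})$. Since $\UU(\xi) : \NN \to \ran(\PP(\xi))$ conjugates $\lin_\xi|_{\ran(\PP(\xi))}$ with $|\xi| \widetilde{\lin}(\xi)$ by the very definition of $\widetilde{\lin}(\xi)$, similarity invariance of the spectrum yields \eqref{eqn:spectra_lin_lin_reduced}.

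\textbf{Taylor expansion.} My plan here is to fix $\widetilde{\xi} \in \Sp^{d-1}$ and view $r \mapsto \lin_{r\widetilde{\xi}} = \lin - i r \widetilde{\xi} \cdot v$ as an affine, hence analytic, family in the real parameter $r \geq 0$, with $r$-independent domain by Lemma \ref{prop:def_lin_xi} --- that is, an analytic family of type (A) in Kato's sense. The projector $\PP(\xi)$ and transformation $\UU(\xi)$ from Lemmas \ref{prop:proj} and \ref{prop:isomorphisms} are precisely those produced by this perturbation setup, so Kato's reduction process \cite[Section~II-2.3]{Kato} applies and yields the universal second-order formulas
\begin{equation*}
\widetilde{\lin}^{(0)} = \PP(0) T^{(1)} \PP(0), \qquad \widetilde{\lin}^{(1)} = -\PP(0) T^{(1)} \SS T^{(1)} \PP(0),
\end{equation*}
with $T^{(1)} = -i \widetilde{\xi} \cdot v$. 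Substituting gives exactly \eqref{eqn:lin_reduced_expansion}.

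\textbf{Main obstacle.} The delicate step will be justifying the $o(|\xi|)$ error control in $\UU(\xi)^{-1} \lin_\xi \UU(\xi)$, since $\lin_\xi$ is unbounded while the remainder $\UU^{(2)}(\xi)$ from Lemma \ref{prop:isomorphisms} is only $o(|\xi|)$ in operator norm. My strategy is to exploit that $\UU(\xi) f$ is confined to the finite-dimensional subspace $\ran(\PP(\xi)) \subset \spcg$, which varies continuously with $\xi$ (Lemma \ref{prop:proj}); on such a tube of subspaces all ambient norms are equivalent uniformly in $|\xi| \leq r_0$, and $\lin_\xi$ restricts to a bounded operator with norm controlled via the $v$-relative boundedness of Lemma \ref{prop:def_lin_xi}. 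The whole perturbation computation therefore reduces to finite dimensions, where Kato's analytic formulas apply without loss.
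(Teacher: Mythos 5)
Your three-part structure mirrors the paper's proof, and the space-independence and spectral-identity arguments are identical to the paper's (Kato's Theorem III-6.17 for the isolated spectrum of the restriction, similarity invariance via $\UU(\xi)$). The interesting divergence is in the Taylor expansion, where you invoke Kato's abstract reduction formulas while the paper computes explicitly. The paper writes
\begin{equation*}
\lin_\xi \PP(\xi) = \frac{1}{2i\pi}\int_\Gamma z\,\RR(z,\xi)\,dz = \sum_{n\geq 0}\frac{|\xi|^n}{2i\pi}\int_\Gamma z\,\RR(z)\bigl(-i\,v\cdot\widetilde{\xi}\,\RR(z)\bigr)^n\,dz,
\end{equation*}
which exhibits $\lin_\xi\PP(\xi)$ as a \emph{bounded} operator with a convergent power series in $|\xi|$, the residue calculus at the simple pole $z=0$ supplying the coefficients; one then multiplies by the expansions of $\UU(\xi)^{\pm 1}$. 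This disposes of the ``main obstacle'' you correctly flag (the unboundedness of $\lin_\xi$ versus the $o(|\xi|)$ norm-estimate on $\UU^{(2)}$) in one stroke, and it is strictly more elementary than your route: no appeal to type-(A) analytic perturbation of unbounded operators is needed, only the Neumann series already established in Proposition~\ref{prop:analicity_resolvent} together with the uniform bound $\sup_{z\in\Gamma}\|v\RR(z)\|_{\BBB(\spc)}<\infty$ from \eqref{eqn:perturbation_resolvent_bound}. Your finite-dimensional reduction sketch is plausible in spirit, but as written it asserts uniform norm equivalence on a $\xi$-varying tube of subspaces without giving the estimate; the contour integral makes this unnecessary. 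One more point worth flagging: you quote the Kato-style coefficients $\widetilde{\lin}^{(0)} = \PP(0)T^{(1)}\PP(0)$ and $\widetilde{\lin}^{(1)} = -\PP(0)T^{(1)}\SS T^{(1)}\PP(0)$ with $T^{(1)}=-i\,\widetilde{\xi}\cdot v$ as if they were off-the-shelf, but verifying that these formulas apply to the present family — unbounded generator, direction-dependent perturbation $\widetilde{\xi}\cdot v$, parameter $r=|\xi|\geq 0$ — requires exactly the contour-integral computation the paper performs, so it cannot simply be cited away. The algebra does come out right (after restriction to $\NN$ and using $\PP(0)\SS=\SS\PP(0)=0$, only the cross term $\PP(0)\,\widetilde{\xi}\cdot v\,\SS\,\widetilde{\xi}\cdot v\,\PP(0)$ survives at order $|\xi|$, matching \eqref{eqn:lin_reduced_expansion}), so the conclusion is correct; what's missing is the explicit derivation that bridges the abstract formula to the unbounded setting.
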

	
	\begin{proof}
		By Lemmas \ref{prop:proj} and \ref{prop:isomorphisms}, the operator $\widetilde{\lin}(\xi)$ is well defined for $\xi \neq 0$, maps~$\NN$ onto itself, and does not depend on the choice of space $\spc$. As $\UU(\xi)$ has a first order Taylor expansion around $\xi = 0$ in~$|\xi|$, we just need to check that the same is true for $\frac{1}{|\xi|}\lin_\xi \PP(\xi)$.
		
		\medskip
		\noindent
		By estimate \eqref{eqn:perturbation_resolvent_bound}, $\sup_{z \in \Gamma} \|v \RR(z)\|_{\BBB(\spc)} < \infty$, thus the series
		$$\lin_\xi \PP(\xi) = \frac{1}{2 i \pi} \int_\Gamma z \RR(z, \xi) dz = \sum_{n = 0}^\infty \frac{|\xi|^n}{2 i \pi}\int_\Gamma z \RR(z) \left(-i v \cdot \widetilde{\xi} \RR(z)\right)^n dz,$$
		converges absolutely in $\BBB(\spc)$ for $|\xi| \leq r_0$ small enough. Using the residue Theorem, the first terms are
		\begin{itemize}
			\item for $|\xi|^0$ : $0$, because $0$ is a semi-simple eigenvalue of $\lin$, and thus a simple pole of $\RR(z)$,
			\item for $|\xi|^1$ : $- i \PP(0) v \cdot \widetilde{\xi} \PP(0)$,
			\item  for $|\xi|^2$ : $\left(\PP(0) v \cdot \widetilde{\xi}\right)^2 \SS + \SS \left(v \cdot \widetilde{\xi} \PP(0)\right)^2$.
		\end{itemize}
		We get \eqref{eqn:lin_reduced_expansion} by combining this expansion with \eqref{eqn:isomorphism_expansion}.
		
		\medskip
		Finally, as $\Gamma$ circles the eigenvalues in $\Sigma(\lin_\xi) \cap \Delta_{-\ah}$ and $\PP(\xi)$ is the spectral projector associated with them, we have $\Sigma\left(\left(\lin_\xi\right)_{|\ran\left(\PP(\xi)\right)}\right) = \Sigma(\lin_\xi) \cap \Delta_{-\ah}$ according to \cite[Theorem III-6.17]{Kato}, and \eqref{eqn:spectra_lin_lin_reduced} holds as $\UU(\xi)$ is an isomorphism mapping $\NN$ onto~$\ran\left(\PP(\xi)\right)$.
	\end{proof}
	
	Before we prove Theorem \ref{thm:intro_spectral}, we need the following lemma that allows to assume~$\xi$ to be of the form $(r, 0,\dots, 0)$ where $r \in [0, r_0]$, and to deal with the fact that we do not know whether or not the eigenvalues~$\lambda_0$ and $\lambda_2$ of this theorem are distinct.
	
	\begin{lem}
		\label{prop:shape_auxiliary}
		For $0 < |\xi| \leq r_0$ and any $O \in \OO(\R^d)$ such that $O \widetilde{\xi} = (1,0,\dots,0)$,
		\begin{equation}
		\label{eqn:assume_r_0_0}
		O \widetilde{\lin}(\xi) = \widetilde{\lin}(|\xi|,0,\dots, 0) O.
		\end{equation}
		Furthermore, there exist $\widetilde{\lambda}_2(r) \in \Comp$ and a $3 \times 3$ matrix $A(r)$ such that the operator~$\widetilde{\lin}(r):= \widetilde{\lin}(r, 0, \dots, 0)$ writes in the basis $\{\varphi_2,\dots, \varphi_d, \varphi_0, \varphi_1, \varphi_{d+1}\}$
		\begin{equation}
		\label{eqn:lin_reduced_form}
		\widetilde{\lin}(r) = \left(
		\begin{matrix}
		\widetilde{\lambda}_2(r) \Id_{d-1} & \textnormal{O}_{(d-1) \times  3} \\ \textnormal{O}_{3 \times (d-1)} & A(r)
		\end{matrix}
		\right).
		\end{equation}
	\end{lem}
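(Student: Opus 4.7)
The plan is to propagate the equivariance $O\lin_\xi = \lin_{O\xi}O$ from Lemma \ref{prop:def_lin_xi} through each of the constructions leading to $\widetilde{\lin}(\xi)$, and then to exploit the residual $\OO(\R^{d-1})$-symmetry once $\xi$ is normalized to $(r,0,\dots,0)$. Note that for any $O \in \OO(\R^d)$, conjugation by $O$ is an isometry of both $\spcg$ and $\spcp$, since the weights $M^{-1/2}$ and $\langle v \rangle^k$ are radial.

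First I would push the intertwining to the resolvent: from $O(\lambda-\lin_\xi) = (\lambda-\lin_{O\xi})O$ one gets $O\RR(\lambda,\xi) = \RR(\lambda,O\xi)O$ on the common resolvent set. Integrating along the Jordan curve $\Gamma$ of Lemma \ref{prop:isolation_eigenvalues}, which depends only on $|\xi|=|O\xi|$ and therefore serves both parameters, gives $O\PP(\xi) = \PP(O\xi)O$; in particular $O$ commutes with $\PP(0)=\Pi_{\lin,0}$ because $\lin$ itself commutes with $O$ by Theorem \ref{thm:hilbert_decomposition}. Since Kato's construction of $\UU(\xi)$ from $\PP(0)$ and $\PP(\xi)$ is purely algebraic---a polynomial in the projectors together with the norm-convergent series for $(1-R)^{-1/2}$---this would yield $O\UU(\xi)O^{-1} = \UU(O\xi)$, and substituting into the definition of $\widetilde{\lin}(\xi)$ while using $|O\xi|=|\xi|$ would produce $O\widetilde{\lin}(\xi)O^{-1} = \widetilde{\lin}(O\xi)$. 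Specializing to $O\widetilde{\xi}=(1,0,\dots,0)$, so that $O\xi=(|\xi|,0,\dots,0)$, delivers \eqref{eqn:assume_r_0_0}.

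For the block structure \eqref{eqn:lin_reduced_form}, I would specialize to $\xi=(r,0,\dots,0)$ and observe that its stabilizer in $\OO(\R^d)$ consists of the matrices $O=\mathrm{diag}(1,\tilde{O})$ with $\tilde{O}\in\OO(\R^{d-1})$; the previous step shows that $\widetilde{\lin}(r)$ commutes with every such $O$. Under this action, $\varphi_0$, $\varphi_1=v_1M$ and $\varphi_{d+1}=(|v|^2-d)M$ are each pointwise invariant (they depend only on $v_1$ and $|v|$), whereas $\{\varphi_2,\dots,\varphi_d\}$ transforms as the standard representation of $\OO(\R^{d-1})$. Decomposing $\NN$ into the $3$-dimensional trivial isotypic component and the $(d-1)$-dimensional standard component---between which no nonzero intertwiner exists---would force $\widetilde{\lin}(r)$ to preserve both subspaces; Schur's lemma applied to the absolutely irreducible standard component would then give $\widetilde{\lin}(r)_{|\mathrm{span}(\varphi_2,\dots,\varphi_d)} = \widetilde{\lambda}_2(r)\,\Id_{d-1}$ for some $\widetilde{\lambda}_2(r)\in\Comp$, while the restriction to the trivial part is an unconstrained $3\times 3$ matrix $A(r)$, which is precisely the shape \eqref{eqn:lin_reduced_form}.

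The main technical obstacle I anticipate lies in the second paragraph: checking carefully that the fractional-power series defining $\UU(\xi)$ does commute with conjugation by $O$, and doing so in a way that respects the fact that $\UU(\xi)$ is considered simultaneously on $\spcg$ and on $\spcp$, so that the resulting block decomposition is intrinsic. Once this functoriality is in place, everything else reduces to elementary representation theory of $\OO(\R^{d-1})$.
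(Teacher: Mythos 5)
Your argument is correct and rests on exactly the same key idea as the paper's: propagate the equivariance $O\lin_\xi=\lin_{O\xi}O$ through the resolvent, $\PP(\xi)$, Kato's $\UU(\xi)$, and $\widetilde{\lin}(\xi)$ to obtain \eqref{eqn:assume_r_0_0}, then exploit the residual symmetry that stabilizes $\xi=(r,0,\dots,0)$ to constrain the matrix of $\widetilde{\lin}(r)$.

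Where you diverge is in how that residual symmetry is exploited. The paper picks two explicit families of orthogonal maps in the stabilizer --- the sign flips $v_j\leftrightarrow -v_j$ and the transpositions $v_j\leftrightarrow v_{j+1}$ for $j\in\{2,\dots,d\}$ --- and uses them directly to kill the off-diagonal blocks of $\langle\widetilde{\lin}(r)\vph_j,\vph_k\rangle$ and to equalize the diagonal entries in the $\{\vph_2,\dots,\vph_d\}$ block. You instead invoke the full stabilizer $\{1\}\times\OO(\R^{d-1})$, decompose $\NN$ into the trivial isotypic component $\Span(\vph_0,\vph_1,\vph_{d+1})$ and the standard component $\Span(\vph_2,\dots,\vph_d)$, and apply Schur's lemma. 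The two are equivalent in substance: the finite subgroup generated by the paper's sign flips and transpositions (a hyperoctahedral group) already has $\Comp^{d-1}$ as an absolutely irreducible component, so the paper's computation is a hands-on Schur argument. Your version is cleaner and makes the mechanism transparent; the paper's is more elementary and avoids any appeal to representation theory, at the modest cost of a short induction on $j$. Either presentation is sound. The ``technical obstacle'' you flag about the functoriality of Kato's fractional-power series is not an issue: the series is a norm-convergent power series in $R=(P-Q)^2$ with $P=\PP(0)$, $Q=\PP(\xi)$, both of which conjugate correctly, so $O\UU(\xi)O^{-1}=\UU(O\xi)$ on either of $\spcg,\spcp$, and the paper likewise treats this as immediate.
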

	
	\begin{proof}
		Recall that whenever $O \in \OO\left(\R^d\right)$ is such that~$O \widetilde{\xi} = (1,0,\dots, 0)$, we have the relation~$O \lin_{\xi} = \lin_{(|\xi|, 0, \dots, 0)} O$. As $\RR(z, \xi)$, $\PP(\xi)$, $\UU(\xi)$ and $\widetilde{\lin}(\xi)$ are constructed from~$\lin_\xi$, \eqref{eqn:assume_r_0_0} holds.
		
		\step{1}{Block decomposition}
		Let $j \in \{2, \dots, d\}$ and $k \neq j$. Consider the orthogonal symmetry $O: v_j \leftrightarrow - v_j$. Noting that $O \widetilde{\lin}(r) = \widetilde{\lin}(r) O$ and $O \vph_j = - \vph_j$, we have
		\begin{align*}
		\left\langle \widetilde{\lin}(r) \vph_j, \vph_k \right\rangle_\spc &= \left\langle O \widetilde{\lin}(r) \vph_j, O\vph_k \right\rangle_\spc \\
		& = \left\langle \widetilde{\lin}(r) O \vph_j, O\vph_k \right\rangle_\spc  \\
		& = -\left\langle \widetilde{\lin}(r) \vph_j, \vph_k \right\rangle_\spc.
		\end{align*}
		Therefore, $\left\langle \widetilde{\lin}(r) \vph_j, \vph_k \right\rangle_\spc = 0$, and similarly $\left\langle \widetilde{\lin}(r) \vph_k, \vph_j \right\rangle_\spc = 0$. We conclude that~$\widetilde{\lin}(r)$ has the matrix representation
		\begin{equation*}
		\left(
		\begin{matrix}
		B(r) & \textnormal{O}_{(d-1) \times  3} \\ \textnormal{O}_{3 \times (d-1)} & A(r)
		\end{matrix}
		\right),
		\end{equation*}
		where $B(r)$ is some diagonal $(d-1) \times (d-1)$ matrix.
		
		\step{2}{The diagonal block}
		Consider the orthogonal symmetry $O: v_j \leftrightarrow v_{j+1}$ where~$j \in \{2, \dots, d-1\}$. Noting that $O \widetilde{\lin}(r) = \widetilde{\lin}(r) O$, we have
		\begin{align*}
		\left\langle \widetilde{\lin}(r) \vph_j, \vph_j \right\rangle_\spc &= \left\langle O \widetilde{\lin}(r) \vph_j, O\vph_j \right\rangle_\spc \\
		&= \left\langle \widetilde{\lin}(r) O\vph_j, O\vph_j \right\rangle_\spc \\
		&= \left\langle \widetilde{\lin}(r) \vph_{j+1}, \vph_{j+1} \right\rangle_\spc.
		\end{align*}
		We then conclude to \eqref{eqn:lin_reduced_form} by induction on $j$.
	\end{proof}
	
	\begin{proof}[Proof of Theorem \ref{thm:intro_spectral}]
		The first steps of the proof rely on the following formulas which come from \eqref{eqn:lin_reduced_expansion} and \eqref{eqn:lin_reduced_form}
		\begin{align*}
		\widetilde{\lin}(r) &= \left(
		\begin{matrix}
		\widetilde{\lambda}_2(r) \Id_2 & \textnormal{O}_3 \\ \textnormal{O}_2 & A(r)
		\end{matrix}
		\right) \\
		&= -i \PP(0) v_1 + r \PP(0) v_1 \SS v_1 + o(r).
		\end{align*}
		
		\step{1}{The multiple eigenvalue}
		The operator $\widetilde{\lin}(r)$ has an obvious $(d-1)$-dimensional eigenvalue~$\widetilde{\lambda}_2(r)$. The corresponding eigenvectors are $\vph_2, \dots, \vph_d$, and as they are normalized for the inner product of $\spcg$, we have
		\begin{align*}
		\widetilde{\lambda}_2(r) &= - i \langle v_1 \vph_2, \vph_2 \rangle_\spcg + r \langle \SS v_1 \vph_2, v_1 \vph_2 \rangle_\spcg + o(r) \\
		&= r \langle \SS v_1 \vph_2, v_1 \vph_2 \rangle_\spcg + o(r),
		\end{align*}
		because $v_1 \vph_2^2 M^{-1}(v)$ is odd in $v_1$. The first order derivative $\langle \SS v_1 \vph_2, v_1 \vph_2 \rangle_\spcg$ is negative because $v_1 \varphi_2 \notin \NN$ and $\langle \SS f, f \rangle_\spcg \leq - \ah \|f\|_\spcg$ for any $f \in \NN^\bot$.
		
		\step{2}{The simple eigenvalues}
		We are now going to investigate the eigenvalues of~$\widetilde{\lin}(r)$ on the subspace $X:=\{\vph_0, \vph_1, \vph_{d+1}\}$, that is to say, we are going to study~$A(r)$. We have that
		\begin{gather*}
		A(r) = -i \PP(0) v_1 + r \PP(0) v_1 \SS v_1 + o(r) \text{ on } X.
		\end{gather*}
		The matrix representation of~$A(0)$ is
		\begin{gather*}
		A(0) = -i \left(
		\begin{matrix}
		0 & 1 & 0 \\
		1 & 0 & \sqrt{2/d} \\
		0&\sqrt{2/d}&1
		\end{matrix}
		\right).
		\end{gather*}
		One can show that $A(0)$ is diagonalizable with the following eigenvalues and corresponding eigenvectors
		\begin{align*}
		&i \sqrt{1+2/d}  &&\psi_{-1} = \displaystyle \left(1 + v_1 + \frac{1}{d} \left(|v|^2-d\right) \right) M, \\
		&0 &&\psi_{0} = \displaystyle  \left(1 - \frac{1}{2} \left(|v|^2-d\right) \right)M, \\
		&-i \sqrt{1+2/d}  &&\psi_{1} = \displaystyle  \left(1- v_1 + \frac{1}{d}\left(|v|^2-d\right) \right) M.
		\end{align*}
		By \cite[Theorem II-5.4]{Kato}, $A(r)$ is diagonalizable with three distinct simple eigenvalues~$\widetilde{\lambda}_j(r)= i j \sqrt{1+2/d} + \beta_j r + o(r)$ for~$r$ small enough, where
		\begin{equation*}
		\beta_j = \left\langle \SS v_1 \psi_j, v_1 \psi_j \right\rangle_\spcg  < 0,
		\end{equation*}
		because $v_1 \psi_j \notin \NN$.
		
		Denoting $\lambda_j(|\xi|):= |\xi| \widetilde{\lambda}_j(|\xi|)$, we have \eqref{eqn:eigenvalue_expansion}, and~\eqref{eqn:spectral_gap_small_freq} using the relation \eqref{eqn:spectra_lin_lin_reduced}.
		The spectral gap property \eqref{eqn:spectral_gap_large_freq} is just Lemma~\ref{prop:spectral_gap_large_xi}. Point \textbf{(1)} is proved.
		
		\step{3}{The spectral decomposition}
		We have the decomposition
		\begin{gather*}
		\sum_{j=-1}^2 \widetilde{\PP}_j(r) = \Id_\NN, ~ \widetilde{\PP}_j(r) \widetilde{\PP}_k(r) = \delta_{j, k} \widetilde{\PP}_j(r), \\
		\widetilde{\lin}(r) = \sum_{j=-1}^2 \widetilde{\lambda}_j(r) \widetilde{\PP}_j(r),
		\end{gather*}
		where $\widetilde{\PP}_j(r)$ is the one-dimensional spectral projector of $A(r)$ associated with $\widetilde{\lambda}_j(r)$ and extended by $0$ on $\Span\left(\vph_2,\dots, \vph_d\right)$, and $\widetilde{\PP}_2(r)$ is the projection on $\Span\left(\vph_2, \dots, \vph_d\right)$ parallel to $X$.
		
		\medskip
		By \eqref{eqn:assume_r_0_0}, we go back to the general case of $\xi$ not necessarily of the form $(r, 0, \dots, 0)$, using $O \in \OO(\R^d)$ such that $O \widetilde{\xi} = (1,0,\dots,0)$:
		\begin{align*}
		\lin_\xi \PP(\xi) &= |\xi| \UU(\xi) O \widetilde{\lin}(|\xi|) O^{-1}  \UU(\xi)^{-1} \\
		&=\sum_{j=-1}^2 \lambda_j(|\xi|) \PP_j(\xi),
		\end{align*}
		where we have defined $\PP_j(\xi) := \UU(\xi) O \widetilde{\PP}_j(|\xi|) O^{-1} \UU(\xi)^{-1}$. By Lemma \ref{prop:isomorphisms}, $\UU(\xi)$ has a first order expansion in $\BBB(\spcg)$ and $\UU(\xi)^{-1}$ has one in $\BBB(\spcp)$, therefore this projector has the expansion \eqref{eqn:projector_expansion} in $\BBB(\spcp, \spcg)$, and $\PP_j^{(0)}\left(\widetilde{\xi}\right) = O \widetilde{\PP}_j^{(0)} O^{-1}$. We have thus proved~\eqref{eqn:lin_xi_p_j_xi}-\eqref{eqn:proj_orthogonal}, and \eqref{eqn:sum_proj_0_pi_0} comes from the definition of $\PP(\xi)$ in the case~$|\xi| = 0$.
		
		\step{5}{Range of the projectors for $|\xi| = 0$}
		For $j=0, \pm 1$, $\PP_j^{(0)}\left(\widetilde{\xi}\right)$ is a projection onto the subspace spanned by $e_j^{(0)}\left(\widetilde{\xi}\right)$, where
		
		\begin{gather*}
		e_0^{(0)}\left(\widetilde{\xi}\right) := O^{-1} \psi_0 = \left(1 - \frac{1}{2} \left(|v|^2-d\right) \right)M, \\
		e_{\pm 1}^{(0)}\left(\widetilde{\xi}\right) := O^{-1} \psi_{\pm 1} = \displaystyle  \left(\pm \widetilde{\xi} \cdot v + \frac{1}{d} \left(|v|^2-d\right) \right) M,
		\end{gather*}
		and $\PP_2^{(0)}\left(\widetilde{\xi}\right)$ is a projection on the subspace
		\begin{gather*}
		\Span \left( e_2^{(0)} \left(\widetilde{\xi}\right), \dots, e_d^{(0)} \left(\widetilde{\xi}\right) \right) = \left\{ c \cdot v M ~| ~ c \cdot \widetilde{\xi} = 0 \right\}, \\
		e_j^{(0)} \left(\widetilde{\xi}\right) := O^{-1} \vph_j = C_j\left(\widetilde{\xi}\right) \cdot v M, ~ j=2,\dots,d,
		\end{gather*}
		where $\left(\widetilde{\xi}, C_2\left(\widetilde{\xi}\right), \dots, C_d\left(\widetilde{\xi}\right)\right)$ is an arbitrary orthonormal basis of $\R^d$.
		Point \textbf{(2)} is proved.
		
		\step{6}{Expression of the projectors}
		Consider $\left\{e_j(\xi)\right\}_{j=2}^d$ the family obtained by the Gram-Schmidt orthogonalization of $\left\{ \PP_j(\xi) e_j^{(0)} \left(\widetilde{\xi}\right) \right\}_{j=2}^d$ for the inner product of~$\spcp$, and denote $e_j(\xi) := \PP_j(\xi) e_j^{(0)}\left(\widetilde{\xi}\right)$ for $j=0, \pm 1$. Note that by \eqref{eqn:projector_expansion}, the function $\PP_j(\xi) e_j^{(0)} \left(\widetilde{\xi}\right)$ follows itself an expansion of the form \eqref{eqn:expansion_e_j} with the same~$e_j^{(0)} \left(\widetilde{\xi}\right)$, and since this family is orthogonal for $|\xi|=0$:
		\begin{align*}
		\forall \, 2 \leq j < k \leq d, ~ \left\langle e_j^{(0)} \left(\widetilde{\xi}\right), e_k^{(0)} \left(\widetilde{\xi}\right) \right\rangle_{\spcp} &= \langle O^{-1} \vph_j, O^{-1}\vph_k \rangle_{\spcp}\\ &= \langle \vph_j, \vph_k \rangle_{\spcp}=0,
		\end{align*}
		and the orthogonalization process is smooth, the $e_j(\xi)$ satisfy \eqref{eqn:expansion_e_j}.
		
		Define the functions $f_j(\xi) := \PP_j(\xi)^* e_j(\xi)$ where the adjoint is considered for the inner product of $\spcp$. They have the expansion \eqref{eqn:expansion_f_j} by \eqref{eqn:projector_expansion}. They satisfy the biorthogonality relation \eqref{eqn:biorthogonal} by \eqref{eqn:proj_orthogonal} when $j$ or $k=0, \pm 1$, and by the orthogonalization when $j, k \in \{2, \dots, d\}$.
		
		Point \textbf{(3)} is proved.
	\end{proof}
	
	\begin{rem}
		The coefficients~$C_j\left(\widetilde{\xi}\right)$ in Theorem \ref{thm:intro_spectral} can therefore be assumed to be measurable, but not continuous as it is a non-vanishing tangent vector field on the sphere $\Sp^{d-1}$, by the hairy ball theorem.
	\end{rem}
	
	\section{Exponential decay of the semigroup}
	\setcounter{equation}{0}
	\setcounter{prop}{0}
	\label{scn:splitting_semigroup}
	
	\begin{proof}[Proof of Theorem \ref{thm:intro_splitting_semigroup}]
		The proof will use the following factorization in $\BBB(\spc)$ that comes from the combination of~\eqref{eqn:fact_interspace} and \eqref{eqn:fact_ling_xi_lambda}:
		\begin{equation}
		\label{eqn:fact_general}
		\RR_{\lin_\xi}(z) = \RR_{\BB_\xi}(z) + \RR_{-(\nu + i v\cdot \xi)}(z) \left(1-K \RR_{-(\nu + i v\cdot \xi)(z)}\right)^{-1} \AA \RR_{\BB_\xi}(z)
		\end{equation}
		which holds whenever $\|K \RR_{-(\nu + i v\cdot \xi)}(z)\|_{\BBB(\spcg)} < 1$.
		
		\step{1}{Global estimates}
		Note that as $\lin_\xi - \omega$ is dissipative for $\omega = -\ah + \|\AA\|_{\BBB(\spc)}$ according to Lemma \ref{prop:def_lin_xi} and the fact that $i v \cdot \xi$ is skew-symmetric, we have
		\begin{equation*}
		\forall \xi \in \R^d, ~ \|S_{\lin_\xi}(t)\|_{\BBB(\spc)} \leq e^{\omega t}.
		\end{equation*}
		Furthermore, \eqref{eqn:est_k_nu} means that for some $T > 0$, $\|K \RR_{-(\nu + i v\cdot \xi)}(z)\|_{\BBB(\spcg)} \leq 1/2$ holds if~$|\ima z | \geq T$ and $\real z \geq -\ah$. The factorization \eqref{eqn:fact_general} combined with the dissipativity of~$\dis_\xi+\ah_1$ from Lemma \ref{prop:def_lin_xi} yields the following bound for $z \in \Delta_{-\ah} \cap \left\{|\ima z| \geq T\right\}$ and $\xi \in \R^d$:
		\begin{align*}
		\left\|\RR_{\lin_\xi}(z) \right\|_{\BBB(\spc)} &\leq 
		\left\|\RR_{\BB_\xi}(z)\right\|_{\BBB(\spc)}  + 2\left\|\RR_{-(\nu + i v\cdot \xi)}(z)\right\|_{\BBB(\spc)} \left\|\AA \RR_{\BB_\xi}(z)\right\|_{\BBB(\spc, \spcg)} \\
		& \leq (\ah_1-\ah)^{-1} + \frac{2 \|\AA\|_{\BBB(\spc, \spcg)}}{(\nu_0 - \ah)(\ah_1 - \ah)} \leq M,
		\end{align*}
		for some $M > 0$. The dissipativity of $\lin_\xi - \omega$ tells us that, taking $T$ large enough, we may assume 
		\begin{equation}
		\label{eqn:glob_est}
		\forall \xi \in \R^d, z \in U, ~ \left\|\RR_{\lin_\xi}(z) \right\|_{\BBB(\spc)} \leq M,
		\end{equation}
		where $U := \Delta_{-\ah} \cap \{|z + \ah| \geq T\}$.
		
		\step{2}{Small $\xi$}
		Recall from Lemma \ref{prop:isolation_eigenvalues} and Theorem \ref{thm:intro_spectral} that $r_0$ was chosen small enough so that for some $\delta > 0$,
		\begin{equation*}
		\Sigma(\lin_\xi) \cap \Delta_{-\ah + 2\delta} = \Sigma(\lin_\xi) \cap \Delta_{-\ah} = \{\lambda_{-1}(|\xi|), \dots, \lambda_2(|\xi|)\}
		\end{equation*}
		whenever $|\xi| \leq r_0$. 	In particular, $-\ah+\delta + i\R \subset \Rho\left(\lin_\xi\right)$ for any $|\xi| \leq r_0$, and by the continuity of~$\RR_{\lin_\xi}(z)$ in $(z, \xi)$ combined with \eqref{eqn:glob_est}, we have for some $K^{(-)}_0 > 0$
		\begin{equation*}
		\label{eqn:small_est_line}
		\forall |\xi| \leq r_0, ~ \sup_{-\ah + \delta + i \R} \|\RR_{\lin_\xi}\|_{\BBB(\spc)} \leq K^{(-)}_0.
		\end{equation*}
		Denote the following invariant subspaces and restriction by
		$$\NN(\xi) := \ran\left(\PP(\xi)\right), ~\NN(\xi)^\bot := \ran\left(1-\PP(\xi)\right),~ \lin^\bot_\xi := \left(\lin_\xi\right)_{| \NN(\xi)^\bot}.$$
		By \cite[Theorem III-6.17]{Kato}, $\Sigma(\lin_\xi^\bot) \cap\Delta_{-\ah} = \emptyset$, and the semigroup and the resolvent associated with $\lin_\xi$ split along the direct sum $\spc = \NN(\xi) \oplus \NN(\xi)^\bot$ as
		\begin{align*}
		S_{\lin_\xi}(t) f = \sum_{j=-1}^2 e^{\lambda_j(|\xi|) t} \PP_j(\xi) f + S_{\lin_\xi^\bot}(t)f^\bot,\\
		\label{eqn:split_res}
		\RR_{\lin_\xi}(z) = \sum_{j=-1}^2 \frac{\PP_j(\xi)}{\lambda_j(|\xi|) - z} f + \RR_{\lin_\xi^\bot}(z) f^\bot,
		\end{align*}
		where $f^\bot=f-\PP(\xi)f$. Using the fact that $\RR_{\lin^\bot_\xi}$ is holomorphic on $\Delta_{-\ah}$ and the maximum modulus principle, we deduce from the relation between $\RR_{\lin_\xi}$ and $\RR_{\lin_\xi^\bot}$, and the previous estimates, the bound
		\begin{equation*}
		\forall z \in \Delta_{-\ah + \delta}, ~\left\|\RR_{\lin^\bot_\xi}(z)\right\|_{\BBB\left(\NN(\xi)^\bot\right)} \leq \max\left\{M, K^{(-)}_0 + 4/\delta\right\} =: K^{(-)},
		\end{equation*}
		which is uniform in $|\xi| \leq r_0$.
		
		\smallskip
		We have shown that for any fixed $|\xi| \leq r_0$, the operator $\lin_\xi^\bot$ satisfies the assumptions of Theorem \ref{thm:gpg} with $X=\NN(\xi)^\bot, \alpha = \omega, C_\alpha = 1, \beta = -\ah + \delta, K_\beta = K^{(-)}$. We thus have the bound 
		\begin{equation*}
		\forall |\xi| \leq r_0, ~ \left\|S_{\lin_\xi^\bot}(t)\right\|_{\BBB(\NN(\xi)^\bot)} \leq C^{(-)} e^{(-\ah + \delta) t},
		\end{equation*}
		for some $C^{(-)} > 0$. For $|\xi| \leq r_0$, we define $\VV(t, \xi)$ to be $S_{\lin_\xi^\bot}(t)$ extended by 0 on~$\NN(\xi)$ (note that it does not change its growth estimate).
		
		\step{3}{Large $\xi$}
		By Lemma \ref{prop:spectral_gap_large_xi}, for some $\beh \in (0, \ah)$, we have
		\begin{equation*}
		\forall |\xi| \geq r_0, ~ \Sigma(\lin_\xi) \cap \overline{\Delta_{-\beh}} = \emptyset.
		\end{equation*}
		By \eqref{eqn:est_k_nu}, we may assume that for some large enough $R > r_0$,
		\begin{equation*}
		\forall |\xi| \geq R, ~ \sup_{\Delta_{-\beh}} \left\|\RR_{\lin_\xi}\right\|_{\BBB(\spc)} \leq M
		\end{equation*}
		also holds. Again, the continuity of $\RR_{\lin_\xi}$ implies the existence of a bound $K_0^{(+)} > 0$ on $-\beh + i \R$ uniform in $|\xi| \geq r_0$, and by a similar argument as in Step 2, we prove
		\begin{equation*}
		\forall |\xi| \geq r_0, \left\|\RR_{\lin_\xi}(z) \right\|_{\BBB(\spc)} \leq K^{(+)}
		\end{equation*}
		for some $K^{(+)} > 0$. We invoke once again Theorem \ref{thm:gpg} with $X=\spc, \alpha = \omega, C_\alpha = 1, \beta = -\beh, K_\beta = K^{(+)}$ to obtain
		\begin{equation*}
		\forall |\xi| \geq r_0, ~ \left\|S_{\lin_\xi}(t)\right\|_{\BBB(\spc)} \leq C^{(+)} e^{-\beh t}
		\end{equation*}
		for some $C^{(+)} > 0$. For $|\xi| \geq r_0$, we define $\VV(t, \xi)$ to be $S_{\lin_\xi}(t)$. We finally get the conclusion with $\gamma=\min\left\{-\ah+\delta, \beh\right\}$ and~$C=\max\{C^{(-)}, C^{(+)}\}$.
	\end{proof}

	\appendix
	\section{}
	\setcounter{equation}{0}
	\setcounter{prop}{0}
	
	\subsection{Spectral theory}
	\label{scn:spectral_theory}
	Consider a Banach space $X$ and $\Lambda \in \CCC(X)$. If $\lambda \in \spdis(\Lambda)$,  then $\lambda$ is a finite order pole of the resolvent, which can be expanded as
	\begin{equation}
	\label{eqn:resolvent_expansion_eigenvalue}
	\RR_\Lambda(\lambda + h) = \sum_{k=1}^{m} \frac{D^k}{h^{k+1}} + \frac{1}{h} \Pi_{\Lambda, \lambda} - \sum_{n=0}^\infty h^n S^{n+1}.
	\end{equation}
	The operator $D \in \BBB(X)$ is called the \textit{eigennilpotent} and satisfies
	\begin{align*}
	D^m & = 0, \\
	D \Pi_{\Lambda, \lambda} &= \Pi_{\Lambda, \lambda} D = D, \\
	\Lambda \Pi_{\Lambda, \lambda} &= \Pi_{\Lambda, \lambda} \Lambda = \lambda \Pi_{\Lambda, \lambda} + D.
	\end{align*}
	The operator $S \in \BBB(X)$ is called the \textit{reduced resolvent} and satisfies
	\begin{align}
	\label{eqn:def_reduced_resolvent}
	S f &= \left\{
	\begin{matrix}
	0, & f \in \ran(\Pi_{\Lambda, \lambda}), \\
	-\left(\lambda-\Lambda\right)^{-1}f, & f \in \ran(1 - \Pi_{\Lambda, \lambda}),
	\end{matrix}
	\right.\\
	\notag
	S \Lambda &\subset \Lambda S = 1 - \Pi_{\Lambda, \lambda}.
	\end{align}
	The eigenvalue $\lambda$ is said to be \textit{semi-simple} when both eigenspaces are equal, or equivalently when the eigennilpotent is zero (which is the same as saying the eigenvalue is a pole of order~1).
	
	\medskip
	When two closed simple paths $\Gamma_1$ and $\Gamma_2$ with values in the resolvent set of $\Lambda$ are such that $\Gamma_1$ lies in the exterior of $\Gamma_2$, we have
	$$\int_{\Gamma_1} \RR_\Lambda(z) dz \int_{\Gamma_2} \RR_\Lambda(z) dz = 0.$$
	
	For a detailed presentation of these results, see \cite[Section III-6.5]{Kato}.
	
	\subsection{Semigroup theory}
	
	The famous Hille-Yosida Theorem (\textbf{(1) $\Leftrightarrow$ (2)} below, see for example \cite[Chapter 1, Theorem 3.1]{Pazy}) and Lummer-Phillips Theorem (\textbf{(1) $\Leftrightarrow$ (3)} below, \cite[Chapter 1, Theorem 4.3]{Pazy}) give necessary and sufficient conditions for a closed and densely defined operator to be a $\CC^0$-semigroup generator.
	
	\begin{theo}[Hille-Yosida-Lummer-Phillips]
		\label{thm:hylp}
		Let $\Lambda$ be a closed and densely defined operator on a Banach space $X$, the following conditions are equivalent for any~$C > 0$ and $\omega \in \R$:
		\begin{enumerate}[label=\textnormal{\textbf{(\arabic*)}}]
			\item $\Lambda$ generates a $\CC^0$-semigroup satisfying $\|S_\Lambda(t)\|_{\BBB(X)} \leq C e^{\omega t}$,
			\item $\Sigma(\Lambda) \cap \Delta_\omega = \emptyset$ and $\displaystyle \|\RR_\Lambda(z)\|_{\BBB(X)} \leq \frac{C}{|\real z - \omega|}$ for $z \in \Delta_\omega$,
			\item $C \|\left(\Lambda - z\right) f\|_X \geq (z - \omega) \|f\|_X$ for $f \in \dom(\Lambda)$, $z > \omega$, and $\Rho(\Lambda) \cap \Delta_\omega \neq \emptyset$.
		\end{enumerate}
	\end{theo}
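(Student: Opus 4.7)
The plan is to prove the three-way equivalence as two independent blocks: (1) $\Leftrightarrow$ (2) (Hille--Yosida) and (1) $\Leftrightarrow$ (3) (Lumer--Phillips), following the classical templates in \cite[Chapter 1]{Pazy}.

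For (1) $\Rightarrow$ (2), I would introduce the Laplace transform $R(z) f := \int_0^\infty e^{-zt} S_\Lambda(t) f \, dt$, which converges absolutely on $\Delta_\omega$ thanks to the growth bound, identify it with $\RR_\Lambda(z)$ by integration by parts against $z - \Lambda$, and read off the resolvent estimate directly. For (2) $\Rightarrow$ (1), I would use the Yosida regularization $\Lambda_n := n^2 \RR_\Lambda(n) - n$ for integers $n > \omega$. The resolvent bound in (2) controls $\|e^{t\Lambda_n}\|_{\BBB(X)} \leq C e^{\omega_n t}$ with $\omega_n \to \omega$, and since $n \RR_\Lambda(n) f \to f$ on all of $X$ and $\Lambda_n f \to \Lambda f$ on $\dom(\Lambda)$, the family $e^{t\Lambda_n} f$ is Cauchy uniformly on compact time intervals; its limit defines a $\CC^0$-semigroup generated by $\Lambda$, with the desired growth bound inherited from the uniform bound on $e^{t\Lambda_n}$.

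For (1) $\Rightarrow$ (3), applying the Laplace representation above to real $z > \omega$ gives $C \|(z - \Lambda) f\|_X \geq (z - \omega) \|f\|_X$ on $\dom(\Lambda)$ directly, and certainly $\Rho(\Lambda) \cap \Delta_\omega \supset (\omega, +\infty)$. For (3) $\Rightarrow$ (1), the dissipativity inequality forces $z - \Lambda$ to be injective with closed range on $\Delta_\omega$, so the set of $z \in \Delta_\omega$ at which $z - \Lambda$ is surjective is both open (Neumann expansion around any point of $\Rho(\Lambda) \cap \Delta_\omega$) and closed (a priori estimate combined with closedness of $\Lambda$). Together with the hypothesis $\Rho(\Lambda) \cap \Delta_\omega \neq \emptyset$ and the connectedness of $\Delta_\omega$, this yields $\Delta_\omega \subset \Rho(\Lambda)$ with the Hille--Yosida resolvent bound of (2), and the previous implication closes the loop.

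The main obstacle lies in the Yosida step (2) $\Rightarrow$ (1): one must simultaneously verify that the regularized semigroups converge strongly, that the limit is a $\CC^0$-semigroup, and that its infinitesimal generator is exactly $\Lambda$ rather than some proper extension. Identifying the generator is the most delicate part and requires the density of $\dom(\Lambda)$ together with the resolvent identity to transfer limits between $\Lambda_n$ and $\Lambda$; without this, one only obtains a semigroup generator that extends $\Lambda$ on the closure of $\dom(\Lambda)$.
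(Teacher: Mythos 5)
The paper records this theorem as a citation (\cite[Chapter 1, Theorems 3.1 and 4.3]{Pazy}) and does not prove it, so there is no in-paper argument to compare against; your outline reproduces the standard textbook proof (Laplace transform for necessity, Yosida approximants for sufficiency, open-and-closed argument for Lumer--Phillips), and the structure is right.

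There is, however, a genuine gap in your sketch of (2)$\Rightarrow$(1), which also reflects an imprecision in the statement as written. From the single-power bound $\|\RR_\Lambda(n)\|\leq C/(n-\omega)$ alone, submultiplicativity gives only $\|\RR_\Lambda(n)^k\|\leq C^k/(n-\omega)^k$, and then
\[
\|e^{t\Lambda_n}\|_{\BBB(X)} \leq e^{-nt}\sum_{k\geq 0}\frac{(tn^2)^k}{k!}\,\big\|\RR_\Lambda(n)^k\big\| \leq \exp\!\left(t\Big(\tfrac{Cn^2}{n-\omega}-n\Big)\right),
\]
whose exponent diverges as $n\to\infty$ whenever $C>1$, so the uniform estimate $\|e^{t\Lambda_n}\|\leq Ce^{\omega_n t}$ with $\omega_n\to\omega$ that you invoke does not follow. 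The classical Hille--Yosida theorem for $C>1$ requires the all-powers estimate $\|\RR_\Lambda(z)^n\|\leq C\,(\real z-\omega)^{-n}$, and the same difficulty infects (3)$\Rightarrow$(1): iterating the a priori inequality in (3) only yields $C^n$, not $C$. Since the theorem is stated for ``any $C>0$'', you should either restrict to $C=1$ (which is the case actually used in the paper through m-dissipativity), strengthen (2) and (3) to the all-powers form, or renorm $X$ by $|\!|\!|f|\!|\!|:=\sup_{n\geq 0,\ z>\omega}(z-\omega)^n\|\RR_\Lambda(z)^n f\|$ and reduce to the quasi-contractive case. Your treatment of generator identification in the Yosida step is otherwise on the right track: $\Lambda_n f\to\Lambda f$ on $\dom(\Lambda)$ shows the limit semigroup's generator $\Lambda'$ extends $\Lambda$, and bijectivity of both $z-\Lambda$ and $z-\Lambda'$ for real $z>\omega$ forces $\Lambda'=\Lambda$.
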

	
	Note that when $X$ is a Hilbert space, an m-dissipative operator, that is to say an operator~$\Lambda$ such that
	$$\real \langle \Lambda f, f \rangle_X \leq 0, ~ \Rho(\Lambda) \cap \Delta_0 \neq \emptyset,$$
	satisfies the equivalent conditions of Theorem \ref{thm:hylp} with $C = 1$ and $\omega = 0$.
	
	Furthermore, still in a Hilbert setting, the growth estimate is directly linked to the size of the half-plane on which the resolvent is bounded: we give here a version of \cite[V-Theorem 1.11]{EN} in which we specify the dependency of the constant in the growth estimate.
	\begin{theo}[Gearhart-Prüss-Greine]
		\label{thm:gpg}
		Consider $\Lambda$ a $\CC^0$-semigroup generator on a Hilbert space $X$, satisfying $\|S_\Lambda(t)\|_{\BBB(X)} \leq C_\alpha e^{\alpha t}$, and whose resolvent is defined and uniformly bounded on $\Delta_\beta$ by $K_\beta$. The semigroup satisfies $\|S_\Lambda(t)\|_{\BBB(X)} \leq C_\beta e^{\beta t}$ for some constructive constant $C_\beta > 0$ depending on $K_\beta, C_\alpha, \alpha$ and $\beta$.
	\end{theo}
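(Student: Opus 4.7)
The plan is to first reduce the statement to a uniform bound by replacing $\Lambda$ with $\widetilde{\Lambda} := \Lambda - \beta$, which generates the rescaled semigroup $S_{\widetilde{\Lambda}}(t) = e^{-\beta t} S_\Lambda(t)$ with growth bound $C_\alpha e^{(\alpha-\beta)t}$ and resolvent $\RR_{\widetilde{\Lambda}}(z) = \RR_\Lambda(z+\beta)$ uniformly bounded by $K_\beta$ on $\Delta_0$. The goal becomes showing $\|S_{\widetilde{\Lambda}}(t)\|_{\BBB(X)} \leq C_\beta$ uniformly in $t \geq 0$; the case $\alpha \leq \beta$ is immediate, so one may assume $\alpha > \beta$. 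The argument then rests on two ingredients standard in the Hilbert-space setting: Plancherel's identity applied to the Laplace transform, and a semigroup-property trick that upgrades square-integrability to a pointwise bound.

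For $s$ strictly larger than $\alpha - \beta$ and any $f \in X$, the $X$-valued function $t \mapsto \mathbf{1}_{t \geq 0}\, e^{-st} S_{\widetilde{\Lambda}}(t) f$ lies in $L^2(\R; X)$, and its Fourier transform equals $\tau \mapsto \RR_{\widetilde{\Lambda}}(s+i\tau)f$ by the Laplace representation of the resolvent. Plancherel's identity thus yields
\begin{equation*}
\int_0^\infty e^{-2st} \|S_{\widetilde{\Lambda}}(t) f\|_X^2 \, dt = \frac{1}{2\pi}\int_{-\infty}^\infty \|\RR_{\widetilde{\Lambda}}(s+i\tau) f\|_X^2 \, d\tau,
\end{equation*}
but to obtain a bound independent of $s > 0$ one must route through the resolvent bound on $\Delta_0$ rather than plug $s=0$ in directly. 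Fixing $s' = \alpha - \beta + 1$ and $s \in (0, s')$, the resolvent identity $\RR_{\widetilde{\Lambda}}(s+i\tau) = \RR_{\widetilde{\Lambda}}(s'+i\tau) + (s'-s)\RR_{\widetilde{\Lambda}}(s+i\tau)\RR_{\widetilde{\Lambda}}(s'+i\tau)$ combined with $\|\RR_{\widetilde{\Lambda}}(s+i\tau)\|_{\BBB(X)} \leq K_\beta$ yields the pointwise bound $\|\RR_{\widetilde{\Lambda}}(s+i\tau)f\|_X \leq (1 + (s'-s)K_\beta)\|\RR_{\widetilde{\Lambda}}(s'+i\tau)f\|_X$, and applying the Plancherel identity at $s'$ together with the crude growth bound $C_\alpha e^{(\alpha-\beta)t}$ controls the $L^2_\tau$-norm of $\RR_{\widetilde{\Lambda}}(s'+i\tau)f$ by an explicit constant times $\|f\|_X^2$. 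Letting $s \to 0^+$ via monotone convergence on the left-hand side of the Plancherel identity, one concludes $\int_0^\infty \|S_{\widetilde{\Lambda}}(t)f\|_X^2 \, dt \leq M^2 \|f\|_X^2$ for a constant $M$ depending constructively on $K_\beta$, $C_\alpha$, and $\alpha-\beta$.

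The square-integrability bound is then upgraded to a uniform pointwise bound by a Datko-type argument. For $T \geq 1$ and $s \in [T-1, T]$, the semigroup property gives $S_{\widetilde{\Lambda}}(T) f = S_{\widetilde{\Lambda}}(T-s) S_{\widetilde{\Lambda}}(s) f$, whence $\|S_{\widetilde{\Lambda}}(T)f\|_X \leq C_\alpha e^{\alpha-\beta} \|S_{\widetilde{\Lambda}}(s)f\|_X$; squaring and integrating over $s \in [T-1, T]$ yields $\|S_{\widetilde{\Lambda}}(T)f\|_X^2 \leq C_\alpha^2 e^{2(\alpha-\beta)} M^2 \|f\|_X^2$. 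For $T \in [0,1]$ the short-time bound $\|S_{\widetilde{\Lambda}}(T)\|_{\BBB(X)} \leq C_\alpha e^{\alpha-\beta}$ covers the remaining times, and taking the maximum of the two constants gives a uniform bound $\|S_{\widetilde{\Lambda}}(T)\|_{\BBB(X)} \leq C_\beta$, equivalent after undoing the shift to $\|S_\Lambda(t)\|_{\BBB(X)} \leq C_\beta e^{\beta t}$.

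The subtlest point is the extension of the Plancherel identity, which a priori is valid only for $s$ above the semigroup's growth rate, down to the strip $s \in (0, s']$ where the hypothesized resolvent bound becomes effective. The resolvent-identity trick above is precisely what makes this extension quantitative, and in particular keeps every constant explicit in terms of $K_\beta, C_\alpha, \alpha$ and $\beta$, as claimed. The result is the constructive version of the Hilbert-space Gearhart--Prüss--Greiner theorem referenced in \cite[V-Theorem 1.11]{EN}.
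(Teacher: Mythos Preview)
The paper does not give its own proof of this theorem; it is stated in the appendix as a quantitative form of \cite[V-Theorem 1.11]{EN} and simply cited. Your outline follows the standard Hilbert-space argument (shift, Plancherel for the Laplace transform, resolvent identity, Datko) and is structurally correct, but there is a real gap at the step ``letting $s \to 0^+$ via monotone convergence on the left-hand side of the Plancherel identity.''

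The identity
\[
\int_0^\infty e^{-2st}\|S_{\widetilde\Lambda}(t)f\|_X^2\,dt \;=\; \frac{1}{2\pi}\int_{\R} \|\RR_{\widetilde\Lambda}(s+i\tau)f\|_X^2\,d\tau
\]
is only justified for $s>\alpha-\beta$, because that is where $e^{-st}S_{\widetilde\Lambda}(t)f$ is known to lie in $L^2_t$ and its Fourier transform is identified with the resolvent. Your resolvent-identity trick bounds the \emph{right-hand side} uniformly on $s\in(0,s']$, but does not by itself extend the identity (or even the inequality $\le$) to $s\le\alpha-\beta$; monotone convergence as written can therefore only reach $s=\alpha-\beta$, not $s=0$. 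The standard cure, and what the proof in \cite{EN} actually does, is a Paley--Wiener step: the uniform $L^2_\tau$ bound on vertical lines places $\lambda\mapsto\RR_{\widetilde\Lambda}(\lambda)f$ in the vector-valued Hardy space $H^2(\Delta_0;X)$, hence it is the Laplace transform of some $g\in L^2(0,\infty;X)$ with $\|g\|_{L^2}\le M\|f\|$, and uniqueness of Laplace transforms on $\{\real\lambda>\alpha-\beta\}$ forces $g=S_{\widetilde\Lambda}(\cdot)f$. With this inserted, the remainder of your argument (the Datko upgrade to a uniform pointwise bound) goes through and the constant is constructive as claimed.
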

	
	\bigskip
	\bibliographystyle{abbrv}
	\bibliography{bibliography}
	
\end{document}